\newtheorem{theorem}{Theorem}
\newtheorem{corollary}[theorem]{Corollary}
\newtheorem{proposition}[theorem]{Proposition}
\newenvironment{proof}[1][Proof]{\textbf{#1.} }{\  \rule{0.5em}{0.5em}}
\begin{document}

\title{On the relation between the distributions of stopping time and
stopped sum with applications}
\date{}
\author{Boutsikas M.V.$^{1}$, Rakitzis A.C.$^{2}$ and Antzoulakos D.L.$^{3}$ 
\\
$^{1}${\small Dept. of Statistics \& Insurance Science, Univ. of Piraeus,
Greece (mbouts@unipi.gr).}\\
$^{2}${\small Dept. of Statistics \& Actuarial-Financial Mathematics, Univ.
of Aegean, Greece (arakitz@aegean.gr).}\\
$^{3}${\small Dept. of Statistics \& Insurance Science, Univ. of Piraeus,
Greece (dantz@unipi.gr).}}
\maketitle

\begin{abstract}
Let $T\ $be a stopping time associated with a sequence of independent random
variables $Z_{1},Z_{2},...$ . By applying a suitable change in the
probability measure we present relations between the moment or probability
generating functions of the stopping time $T$ and the stopped sum $%
S_{T}=Z_{1}+Z_{2}+...+Z_{T}$. These relations imply that, when the
distribution of $S_{T}$\ is known, then the distribution of $T$\ is also
known and vice versa. Applications are offered in order to illustrate the
applicability of the main results, which also have independent interest. In
the first one we consider a random walk with exponentially distributed up
and down steps and derive the distribution of its first exit time from an
interval $(-a,b).$ In the second application we consider a series of samples
from a manufacturing process and we let $Z_{i},i\geq 1$, denoting the number
of non-conforming products in the $i$-th sample. We derive the joint
distribution of the random vector $(T,S_{T})$, where $T$ is the waiting time
until the sampling level of the inspection changes based on a $k$-run
switching rule. Finally, we demonstrate how the joint distribution of $%
(T,S_{T})$ can be used for the estimation of the probability $p$ of an item
being defective, by employing an EM algorithm.

\medskip

\textbf{Key words and phrases}: stopping time, stopped sum, exponentially
tilted probability measure, random walk, first exit time, boundary crossing
probabilities, acceptance sampling, $k$-run switching rule, sooner waiting
time distribution, joint generating function, EM algorithm.

\textbf{AMS 2000 subject classification}:

\begin{tabular}{ll}
Primary: & 60G40 \\ 
Secondary: & 60G50, 62E15%
\end{tabular}
\end{abstract}

\bigskip

\section{Introduction}

In several areas of applied science researchers are interested in studying
the time $T$ to take a given action, based on sequentially observed random
variables (rv's) $Z_{1},~Z_{2},~\ldots $ , as well as in the associated
partial sums $S_{n}=Z_{1}+Z_{2}+\ldots +Z_{n},n=1,2,...$ . The waiting time $%
T$ and the corresponding random sum $S_{T}$ are usually referred to as 
\textit{stopping time} and \textit{stopped sum }respectively. Stopping time
problems arise in many diverse scientific areas such as sequential analysis,
quality control, mathematical finance, operations research, biology,
actuarial science, etc. For a gentle introduction to the theory of stopping
times and stopped sums, the interested reader is referred to Karlin and
Taylor (1975). For a more thorough investigation of the theory of stopped
random walks we refer to Gut (2009).

When studying the distribution of $T$ in a sequence of independent and
identically distributed (iid) trials, the stopped sum $S_{T}$ also provides
useful information about the nature of the statistical experiment. The
pioneering work of Abraham Wald (1945) in the area of sequential analysis
established powerful identities that relate the distributional properties of 
$T$ and $S_{T}$. These identities are usually referred to as \textit{Wald's
(fundamental)\ Identity} and \textit{Wald's} (\textit{first}) \textit{%
equation} and they are, respectively, given by 
\begin{equation}
\mathbb{E}(\left( M_{Z}(w)\right) ^{-T}e^{wS_{T}})=1,  \label{eq26}
\end{equation}%
where $M_{Z}(w)=\mathbb{E}(e^{wZ})$, and 
\begin{equation}
\mathbb{E}\left( S_{T}\right) =\mathbb{E}\left( Z\right) \mathbb{E}\left(
T\right) .  \label{eq31}
\end{equation}

In a recent article Antzoulakos and Boutsikas (2007) established a
particular relation between the distributions of $T$ and $S_{T}.$ More
specifically, they considered the waiting time $T_{r}$ until the $r-$th
occurrence of a pattern $\mathcal{E}$ in a sequence of binary trials $%
Z_{1},Z_{2},...$ and the total number of successes $S_{T_{r}}$ observed
until that time, and established a direct method to obtain the joint
probability generating function (pgf) of $(T_{r},S_{T_{r}})$ from the pgf of 
$T_{r}$ only. In this paper we extend the aforementioned result for any
distribution of the $Z_{i}$'s and any stopping time $T$, determining the
joint distribution of $(T,S_{T})$ from the distribution of $T$ or $S_{T}$%
\textbf{.}

The organization of the paper is as follows: In Section 2 we state the main
identities that connect the distributions of $T$ and $S_{T},$ along with the
required theoretical backup. An important part of our work is comprised of
the applications that are presented in Section 3. These applications, not
only serve as an illustration of the applicability of the results of Section
2, but they also have an interest on their own. In the first one we consider
the first exit time $T$ from an interval $(-a,b)$ ($a>0$ or $a=\infty $) of
a random walk $S_{i},$ $i=1,2,...,$ with exponentially distributed up and
down steps$.$ By identifying the distribution of $S_{T}$ we extract an exact
formula for the pgf of the boundary crossing time $T.$ In the second
application we consider a sequence $Z_{i},$ $i=1,2,...,$ of measurements
taken from samples corresponding to lots of products from a manufacturing
process (e.g. number of defective items in each sample). Denoting by $T$ the
waiting time until the sampling level of the inspection changes using a $k$%
-run switching rule associated with $Z_{i}$'s, we obtain the joint pgf of $T$
and $S_{T}$ ($S_{T}$ denotes the total number of defective items observed
until switching) by exploiting the fact that $T$ follows a geometric
distribution of order $k$. Finally, we demonstrate how the joint
distribution of $T$ and $S_{T}$ can be useful in the estimation of the
probability $p$ of an item being defective, by employing an EM algorithm.

\section{Identities connecting the distributions of stopped sum and stopping
time.}

Let $F_{1},F_{2},...$ be a sequence of distributions on $\mathbf{R}$ such
that $\int_{\mathbf{R}}e^{wz}dF_{i}(z)<\infty ,$ $i=1,2,...,$ for every $w$
in an interval $\mathcal{W}$ containing zero. We can always construct a
sequence of independent rv's $Z_{1},Z_{2},...$ \ on a probability space $%
(\Omega ,\mathcal{F},\mathbb{P})$ such that $Z_{i}\sim F_{i},i=1,2,...$ \ .
Moreover, if $F_{i}(\cdot |w)$ denotes the \textit{exponentially tilted} $%
F_{i},$ i.e. $F_{i}(x|w):=\mathbb{E}(e^{wZ_{i}}I_{[Z_{i}\leq x]})/\mathbb{E}%
(e^{wZ_{i}}),~w\in \mathbf{R},$ we can always change $\mathbb{P}$ to a new
probability measure $\mathbb{\tilde{P}}_{w}$ on $(\Omega ,\mathcal{F})$
under which $Z_{1},Z_{2},...$ \ are still independent but now $Z_{i}\sim
F_{i}(\cdot |w),~i=1,2,...$ . A formal construction of the probability space 
$(\Omega ,\mathcal{F},\mathbb{\tilde{P}}_{w})$ is given in the Appendix.

We shall write $\mathbb{\tilde{E}}_{w}(\cdot )$ for the expected value with
respect to the measure $\mathbb{\tilde{P}}_{w}.$ We shall also use the
notation $\mathbb{P}:=\mathbb{\tilde{P}}_{0},\mathbb{E}:=\mathbb{\tilde{E}}%
_{0}.$ It is easy to see that, in the special case when $Z_{1},Z_{2},...$
possess the same density $f$ with respect to $\mathbb{P}$, their density $%
f_{w}$ with respect to $\mathbb{\tilde{P}}_{w}$ is given by 
\begin{equation*}
f_{w}(z)=\frac{e^{wz}f(z)}{\mathbb{E}(e^{wZ_{i}})}.
\end{equation*}

\textbf{Remark. (}\textit{The derivative }$d\mathbb{\tilde{P}}_{w}/d\mathbb{P%
}$\textit{\ on }$\mathcal{F}_{n}$\textbf{)}. Define $\mathcal{F}_{n}=\sigma
(Z_{1},Z_{2},...Z_{n})\subseteq \mathcal{R}^{\mathbf{N}}$ to be the minimal $%
\sigma $-algebra generated by $Z_{1},Z_{2},...Z_{n}$. The sequence $\mathcal{%
F}_{1},\mathcal{F}_{2},...$ is a nondecreasing sequence of $\sigma $%
-algebras in $\mathcal{R}^{\mathbf{N}}.$ The Radon-Nikodym derivative of $%
\mathbb{\tilde{P}}_{w}$ with respect to $\mathbb{P}$ when both are
restricted to $\mathcal{F}_{n}$ is $X_{n}=e^{w(Z_{1}+Z_{2}+...+Z_{n})}/%
\tprod_{i=1}^{n}\mathbb{E}(e^{wZ_{i}})$ (that is, $\mathbb{\tilde{P}}%
_{w}(A)=\int_{A}X_{n}d\mathbb{P},A\in \mathcal{F}_{n}$) and hence 
\begin{equation}
\mathbb{\tilde{E}}_{w}(Y)=\frac{\mathbb{E}(Ye^{w(Z_{1}+Z_{2}+...+Z_{n})})}{%
\tprod_{i=1}^{n}\mathbb{E}(e^{wZ_{i}})}  \label{eq10}
\end{equation}%
for every $\mathcal{F}_{n}$-measurable random variable $Y$. It is worth
mentioning that, even though $\mathbb{P}$ and $\mathbb{\tilde{P}}_{w}$ are
equivalent on every $\mathcal{F}_{n}$, they are mutually singular on $%
\mathcal{F}_{\infty }=\mathcal{R}^{\mathbf{N}}$ when $w\neq 0$ and $%
Z_{1},Z_{2},...$ are identically distributed (that is, there exist disjoint
sets $A,A^{\prime }$ in $\mathcal{R}^{\mathbf{N}}$ such that $\mathbb{\tilde{%
P}}_{w}(A)=1$ and $\mathbb{P}(A^{\prime })=1$). This can be easily seen
since there exists a set $B\in $ $\mathcal{B}(\mathbf{R}\mathbb{)}$ such
that $\mathbb{\tilde{P}}_{w}(Z_{i}\in B)\neq \mathbb{P}(Z_{i}\in B)$ while
(invoking the strong law of large numbers) $\frac{1}{n}%
\sum_{i=1}^{n}I_{[Z_{i}\in B]}$ converges to $\mathbb{\tilde{P}}%
_{w}(Z_{i}\in B)$ on some $A\in \mathcal{R}^{\mathbf{N}}$ with $\mathbb{%
\tilde{P}}_{w}(A)=1$ and to $\mathbb{P}(Z_{i}\in B)$ on some $A^{\prime }\in 
\mathcal{R}^{\mathbf{N}}$ with $\mathbb{P}(A^{\prime })=1$. Since $\mathbb{%
\tilde{P}}_{w}(Z_{i}\in B)\neq \mathbb{P}(Z_{i}\in B)$ we have that $A\cap
A^{\prime }=\varnothing .$ Thus $\mathbb{P}(X_{n}\rightarrow 0)=1$ (see e.g.
Theorem 35.8 in Billingsley (1986)) even though $\mathbb{E}(X_{n})=1$ for
every $n.$ Therefore, in general, there does not exist a Radon-Nikodym
derivative of $\mathbb{\tilde{P}}_{w}$ with respect to $\mathbb{P}$ on $%
\mathcal{R}^{\mathbf{N}}$ and hence $\mathbb{\tilde{P}}_{w}$ cannot be
constructed on $\mathcal{R}^{\mathbf{N}}$ from $\mathbb{P}$ through a
Radon-Nikodym derivative. This fact does not induce any problem since we
have guaranteed the existence of $\mathbb{\tilde{P}}_{w}$ via the Kolmogorov
Existence Theorem (see Appendix).

\bigskip

Let now $T$ be a stopping time associated with the sequence $Z_{1},Z_{2},...$%
, i.e. the set $[T=n]=\{ \omega \in \Omega :T(\omega )=n\}$ belongs to $%
\mathcal{F}_{n}=\sigma (Z_{1},Z_{2},...,Z_{n})$ for every $n=1,2,...,$ \ and
let $S_{T}:=Z_{1}+Z_{2}+...+Z_{T}.$ The next result relates the
distributions of $T$ and $S_{T}$.

\begin{theorem}
Let $T$ be a stopping time associated with the sequence $Z_{1},Z_{2},...$ ,
and let $Y$ be a random variable such that $Y\cdot I_{[T=n]}$ is $\mathcal{F}%
_{n}$-measurable. Then%
\begin{equation}
\mathbb{E}(Ye^{wS_{T}}I_{[T<\infty ]})=\mathbb{\tilde{E}}_{w}(Y%
\prod_{i=1}^{T}\mathbb{E}(e^{wZ_{i}})I_{[T<\infty ]})  \label{eq11}
\end{equation}%
for all real $w$ such that the above expectations exist.
\end{theorem}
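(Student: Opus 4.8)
The plan is to reduce the identity to the finite-horizon change-of-measure formula (\ref{eq10}) by slicing the event $[T<\infty]$ according to the value of $T$. First I would write $I_{[T<\infty]}=\sum_{n=1}^{\infty}I_{[T=n]}$ and, using this partition together with a (provisionally formal) interchange of summation and expectation, rewrite the left-hand side as $\sum_{n=1}^{\infty}\mathbb{E}(Ye^{wS_{T}}I_{[T=n]})$. On the event $[T=n]$ the two quantities depending on $T$ collapse to fixed-index versions, namely $S_{T}=Z_{1}+\cdots+Z_{n}$ and $\prod_{i=1}^{T}\mathbb{E}(e^{wZ_{i}})=\prod_{i=1}^{n}\mathbb{E}(e^{wZ_{i}})$, so that each summand becomes $\mathbb{E}(YI_{[T=n]}e^{w(Z_{1}+\cdots+Z_{n})})$.

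The key point is that, by hypothesis, $Y\cdot I_{[T=n]}$ is $\mathcal{F}_{n}$-measurable (here $I_{[T=n]}$ is itself $\mathcal{F}_{n}$-measurable precisely because $T$ is a stopping time). I can therefore apply (\ref{eq10}) with the $\mathcal{F}_{n}$-measurable random variable $YI_{[T=n]}$ in place of $Y$, obtaining $\mathbb{E}(YI_{[T=n]}e^{w(Z_{1}+\cdots+Z_{n})})=\bigl(\prod_{i=1}^{n}\mathbb{E}(e^{wZ_{i}})\bigr)\,\mathbb{\tilde{E}}_{w}(YI_{[T=n]})$. Reabsorbing the constant $\prod_{i=1}^{n}\mathbb{E}(e^{wZ_{i}})$ as $\prod_{i=1}^{T}\mathbb{E}(e^{wZ_{i}})$ on $[T=n]$, each summand equals $\mathbb{\tilde{E}}_{w}(Y\prod_{i=1}^{T}\mathbb{E}(e^{wZ_{i}})I_{[T=n]})$, and re-summing over $n$ (again interchanging sum and expectation) recovers the right-hand side.

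The main obstacle is not the algebra but the justification of the two interchanges of the infinite sum with the expectation operators $\mathbb{E}$ and $\mathbb{\tilde{E}}_{w}$. For a general signed $Y$ and arbitrary real $w$ the summands need not keep a constant sign, so monotone convergence does not apply to the series as written. The plan is to first establish the identity for the nonnegative integrand $|Y|$ (with $e^{wS_{T}}\geq 0$ automatically), where Tonelli's theorem legitimises the term-by-term manipulation and yields a finite total under the standing integrability assumptions; this certifies absolute convergence. Then, under the hypothesis that the displayed expectations exist and are finite, I would split $Y=Y^{+}-Y^{-}$, apply the already-proved nonnegative case to each part, and recombine by linearity to obtain (\ref{eq11}). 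I would also record that the per-$n$ step tacitly uses $\mathbb{E}(e^{wZ_{i}})<\infty$, which holds for $w$ in the interval $\mathcal{W}$ by the integrability imposed on the $F_{i}$.
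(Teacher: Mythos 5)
Your proposal is correct and follows essentially the same route as the paper's proof: partition $[T<\infty]$ into the events $[T=n]$, apply the finite-horizon change-of-measure identity (\ref{eq10}) to the $\mathcal{F}_{n}$-measurable variable $YI_{[T=n]}$, and re-sum. The only difference is in the technical justification of the two interchanges of sum and expectation --- the paper invokes the Dominated Convergence Theorem with dominating functions $|Y|e^{wS_{T}}I_{[T<\infty ]}$ and $|Y|\prod_{i=1}^{T}\mathbb{E}(e^{wZ_{i}})I_{[T<\infty ]}$, while you use Tonelli on $|Y|$ followed by the decomposition $Y=Y^{+}-Y^{-}$; both are valid under the theorem's existence hypothesis, and yours has the minor bonus of showing the two integrability conditions are equivalent.
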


\begin{proof}
If $G_{k}:=Ye^{wS_{T}}\sum_{n=1}^{k}I_{[T=n]}$ then $\left \vert
G_{k}\right
\vert \leq \left \vert Y\right \vert e^{wS_{T}}I_{[T<\infty ]}$
a.s. and $\mathbb{E}(|Y|e^{wS_{T}}I_{[T<\infty ]})<\infty ,$ which, by the
Dominated Convergence Theorem (DCT), implies that $\mathbb{E}%
(\lim_{k}G_{k})=\lim_{k}\mathbb{E}(G_{k})$. Thus,%
\begin{equation*}
\mathbb{E}(Ye^{wS_{T}}I_{[T<\infty ]})=\mathbb{E}(\lim_{k\rightarrow \infty
}G_{k})=\lim_{k\rightarrow \infty }\mathbb{E}(G_{k})=\tsum
\limits_{n=1}^{\infty }\mathbb{E}(YI_{[T=n]}e^{wS_{n}}).
\end{equation*}%
By theorems' assumptions, the r.v. $YI_{[T=n]}$ is $\mathcal{F}_{n}$%
-measurable and hence (see (\ref{eq10}) above) $\mathbb{\tilde{E}}%
_{w}(YI_{[T=n]})=\mathbb{E}(YI_{[T=n]}e^{wS_{n}})/\tprod_{i=1}^{n}\mathbb{E}%
(e^{wZ_{i}})$. Therefore,%
\begin{equation*}
\mathbb{E}(Ye^{wS_{T}}I_{[T<\infty ]})=\tsum \limits_{n=1}^{\infty }\mathbb{%
\tilde{E}}_{w}(YI_{[T=n]}\tprod_{i=1}^{n}\mathbb{E}(e^{wZ_{i}}))=\tsum%
\limits_{n=1}^{\infty }\mathbb{\tilde{E}}_{w}(YI_{[T=n]}\tprod_{i=1}^{T}%
\mathbb{E}(e^{wZ_{i}}))
\end{equation*}%
which, invoking again the DCT, leads to (\ref{eq11}) provided that $\mathbb{%
\tilde{E}}_{w}(|Y|\tprod_{i=1}^{T}\mathbb{E}(e^{wZ_{i}})I_{[T<\infty
]})<\infty .$
\end{proof}

The above result can be considered as a version of Wald's Likelihood Ratio
Identity (WLRI, see e.g. Siegmund (1985), or Lai (2004)).

In the sequel we focus on a special use of Equation (\ref{eq11}). Our aim is
to generalize the following result of Antzoulakos and Boutsikas (2007): If $%
Z_{1},Z_{2}$,.. is a sequence of iid binary rv's (trials) with $\mathbb{P}%
(Z_{i}=1)=1-\mathbb{P}(Z_{i}=0)=p$ and $T$ denotes the waiting time (i.e.
the number of trials)\ until a certain pattern $\mathcal{E}$ occurs in $%
Z_{1},Z_{2}$,.. \ then, the joint pgf of $(T,S_{T})$\ follows from the pgf
of $T$\ through the relation%
\begin{equation}
\mathbb{E}(u^{T}w^{S_{T}})=\mathbb{\tilde{E}}_{w}\left(
(u(1-p+pw))^{T}\right)  \label{eq12}
\end{equation}%
for all $w,u\ $in a neighborhood of 0, where the expectation $\mathbb{\tilde{%
E}}_{w}$ is considered under $\mathbb{\tilde{P}}_{w}$ such that $\mathbb{%
\tilde{P}}_{w}(Z_{i}=1)=1-\mathbb{\tilde{P}}_{w}(Z_{i}=0)=\tfrac{pw}{pw+1-p}$%
. The above identity, reveals that, when the distribution of $T$ is known
then the joint distribution of $(T,S_{T})$ is also known. In other words,
the distribution of $T$ uniquely determines the joint distribution of $%
(T,S_{T})$ and consequently the distribution of $S_{T}.$

A generalization of (\ref{eq12}) could refer to any distribution for the $%
Z_{i}$'s and any stopping time $T.$ In addition, an inverse form of (\ref%
{eq12}) could also be very useful implying that the distribution of $S_{T}$
uniquely determines the joint distribution of $(T,S_{T}).$ As it is shown in
the next two corollaries, generalizations of this form can be easily derived
from Equation (\ref{eq11}).

\begin{corollary}
\label{cor1}If $\mathbb{P}(T<\infty )=\mathbb{\tilde{P}}_{w}(T<\infty )=1$
then 
\begin{equation}
\mathbb{E}(u^{T}e^{wS_{T}})=\mathbb{\tilde{E}}_{w}((u\mathbb{E}%
(e^{wZ}))^{T}),  \label{eq13}
\end{equation}%
for all real $u,w$ such that the above expectations exist. In particular, $%
\mathbb{E}(e^{wS_{T}})=\mathbb{\tilde{E}}_{w}(\mathbb{E}(e^{wZ})^{T}).$
\end{corollary}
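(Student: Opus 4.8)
The plan is to obtain (\ref{eq13}) as a direct specialization of Theorem 1, taking $Y=u^{T}$ in the fundamental identity (\ref{eq11}). First I would check that this choice of $Y$ meets the measurability hypothesis of the theorem: since $Y\cdot I_{[T=n]}=u^{n}I_{[T=n]}$ and $I_{[T=n]}$ is $\mathcal{F}_{n}$-measurable (because $T$ is a stopping time), the product $u^{n}I_{[T=n]}$ is $\mathcal{F}_{n}$-measurable for every $n$, so Theorem 1 applies and gives
\begin{equation*}
\mathbb{E}(u^{T}e^{wS_{T}}I_{[T<\infty]})=\mathbb{\tilde{E}}_{w}\Big(u^{T}\prod_{i=1}^{T}\mathbb{E}(e^{wZ_{i}})\,I_{[T<\infty]}\Big).
\end{equation*}

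Next I would invoke the (implicit) assumption that the $Z_{i}$ are identically distributed, so that $\mathbb{E}(e^{wZ_{i}})=\mathbb{E}(e^{wZ})$ does not depend on $i$; then the random product collapses, $\prod_{i=1}^{T}\mathbb{E}(e^{wZ_{i}})=(\mathbb{E}(e^{wZ}))^{T}$, and the right-hand side becomes $\mathbb{\tilde{E}}_{w}((u\,\mathbb{E}(e^{wZ}))^{T}I_{[T<\infty]})$. It then remains only to discard the two indicator factors. The hypothesis $\mathbb{P}(T<\infty)=1$ gives $I_{[T<\infty]}=1$ almost surely under $\mathbb{P}$, so the left-hand side equals $\mathbb{E}(u^{T}e^{wS_{T}})$; likewise $\mathbb{\tilde{P}}_{w}(T<\infty)=1$ gives $I_{[T<\infty]}=1$ almost surely under $\mathbb{\tilde{P}}_{w}$, so the right-hand side equals $\mathbb{\tilde{E}}_{w}((u\,\mathbb{E}(e^{wZ}))^{T})$. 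This establishes (\ref{eq13}), and setting $u=1$ immediately produces the stated special case $\mathbb{E}(e^{wS_{T}})=\mathbb{\tilde{E}}_{w}(\mathbb{E}(e^{wZ})^{T})$.

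The step I expect to require the most care is precisely the removal of the indicators, and this is exactly why the corollary carries \emph{two} finiteness hypotheses rather than one. As the Remark preceding the theorem stresses, $\mathbb{P}$ and $\mathbb{\tilde{P}}_{w}$ are mutually singular on $\mathcal{F}_{\infty}$ when $w\neq 0$, so $\{T<\infty\}$ having full probability under one measure tells us nothing about its probability under the other; one genuinely needs $\mathbb{P}(T<\infty)=\mathbb{\tilde{P}}_{w}(T<\infty)=1$ in order to erase the indicator on each side separately. Throughout, I would also keep track of the standing restriction to those real $u,w$ for which all the expectations in sight are finite, so that the appeal to (\ref{eq11}) — itself proved under an integrability proviso via dominated convergence — is legitimate. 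Everything else is bookkeeping, and the result is the promised generalization of (\ref{eq12}) to arbitrary identically distributed $Z_{i}$ and arbitrary stopping times $T$.
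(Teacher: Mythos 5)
Your proposal is correct and follows exactly the paper's own route: the authors also prove the corollary by applying Theorem 1 with $Y=u^{T}$ (noting that $u^{T}I_{[T=n]}=u^{n}I_{[T=n]}$ is $\mathcal{F}_{n}$-measurable), taking the $Z_{i}$ to be iid so the product collapses to $(\mathbb{E}(e^{wZ}))^{T}$, and using the two a.s.-finiteness hypotheses to remove the indicators. Your additional remarks on why both hypotheses are genuinely needed (mutual singularity of $\mathbb{P}$ and $\mathbb{\tilde{P}}_{w}$ on $\mathcal{F}_{\infty}$) are accurate and consistent with the paper's Remark, just spelled out more explicitly than the paper's one-line proof.
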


\begin{proof}
It follows from (\ref{eq11}) by letting $Z_{1},Z_{2},...$ be a sequence of
iid rv's and by setting $Y=u^{T}$ (note that $u^{T}I_{[T=n]}=u^{n}I_{[T=n]}$
is $\mathcal{F}_{n}$-measurable).
\end{proof}

\begin{corollary}
\label{cor2}If there exists a real function $w_{u}$ such that $\mathbb{E}%
(e^{w_{u}Z})=u^{-1}$ and $\mathbb{\tilde{P}}_{w_{u}}$ is a probability
measure with $\mathbb{P}(T<\infty )=\mathbb{\tilde{P}}_{w_{u}}(T<\infty )=1$%
, then 
\begin{equation}
\mathbb{E}(u^{T}e^{xS_{T}})=\mathbb{\tilde{E}}_{w_{u}}(e^{(x-w_{u})S_{T}}),
\label{eq14}
\end{equation}%
for all real $u,x$ such that the above expectations exist. In particular, $%
\mathbb{E}(u^{T})=\mathbb{\tilde{E}}_{w_{u}}(e^{-w_{u}S_{T}}).$
\end{corollary}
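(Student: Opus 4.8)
The plan is to reduce Corollary \ref{cor2} to the master identity (\ref{eq11}) of the Theorem by a judicious choice of the auxiliary random variable $Y$, exactly in the spirit of how Corollary \ref{cor1} was obtained by taking $Y=u^{T}$. The new feature is that I must carry an extra exponential factor of $S_{T}$ inside $Y$, tuned so that after tilting by $w_{u}$ the power of $T$ collapses to $1$ and only a function of $S_{T}$ survives on the right-hand side.

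Concretely, I would let $Z_{1},Z_{2},\ldots$ be iid with common moment generating function $\mathbb{E}(e^{wZ})$, fix the tilting parameter at $w=w_{u}$ (the solution of $\mathbb{E}(e^{w_{u}Z})=u^{-1}$ guaranteed by hypothesis), and apply (\ref{eq11}) with
\begin{equation*}
Y=u^{T}e^{(x-w_{u})S_{T}}.
\end{equation*}
First I would check admissibility of this $Y$: since $u^{T}e^{(x-w_{u})S_{T}}I_{[T=n]}=u^{n}e^{(x-w_{u})S_{n}}I_{[T=n]}$ is a function of $Z_{1},\ldots,Z_{n}$, the product $Y\cdot I_{[T=n]}$ is $\mathcal{F}_{n}$-measurable, so the Theorem applies.

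The two sides then simplify by design. On the left, the factor $e^{w_{u}S_{T}}$ produced by (\ref{eq11}) combines with the $e^{(x-w_{u})S_{T}}$ inside $Y$ to give $\mathbb{E}(u^{T}e^{xS_{T}}I_{[T<\infty]})$. On the right, iid steps give $\prod_{i=1}^{T}\mathbb{E}(e^{w_{u}Z_{i}})=\mathbb{E}(e^{w_{u}Z})^{T}$, so the right-hand side reads $\mathbb{\tilde{E}}_{w_{u}}((u\,\mathbb{E}(e^{w_{u}Z}))^{T}e^{(x-w_{u})S_{T}}I_{[T<\infty]})$; substituting $\mathbb{E}(e^{w_{u}Z})=u^{-1}$ makes the bracket equal to $1^{T}=1$, leaving $\mathbb{\tilde{E}}_{w_{u}}(e^{(x-w_{u})S_{T}}I_{[T<\infty]})$. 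Finally, the hypotheses $\mathbb{P}(T<\infty)=\mathbb{\tilde{P}}_{w_{u}}(T<\infty)=1$ let me drop both indicators, which yields (\ref{eq14}); the displayed special case follows on setting $x=0$.

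These simplifications are purely algebraic once the right $Y$ is in place, so the only point requiring genuine care is the existence and integrability bookkeeping that the Theorem demands. In particular I must ensure that the dominating/convergence conditions implicit in (\ref{eq11}) hold for this $Y$ on the stated range of $(u,x)$, and that $w_{u}$ is genuinely real with $\mathbb{\tilde{P}}_{w_{u}}$ a bona fide probability measure, i.e. that $u$ lies in the range of $1/\mathbb{E}(e^{wZ})$ over the admissible interval $\mathcal{W}$. This, together with the almost-sure finiteness of $T$ under both $\mathbb{P}$ and $\mathbb{\tilde{P}}_{w_{u}}$, is exactly what licenses discarding $I_{[T<\infty]}$ in the last step; everything else is routine.
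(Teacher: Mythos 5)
Your proposal is correct and follows essentially the same route as the paper: you apply the Theorem's identity (\ref{eq11}) at the tilt $w=w_{u}$ with the same choice $Y=u^{T}e^{(x-w_{u})S_{T}}$, verify $\mathcal{F}_{n}$-measurability of $YI_{[T=n]}$, and use $u\,\mathbb{E}(e^{w_{u}Z})=1$ to collapse the right-hand side. Your explicit handling of the indicators $I_{[T<\infty]}$ via the almost-sure finiteness hypotheses is a point the paper leaves implicit, but it is the same argument.
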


\begin{proof}
By setting $Y=u^{T}e^{(x-w_{u})S_{T}}$ we have that the rv $%
YI_{[T=n]}=u^{n}e^{(x-w_{u})(Z_{1}+...+Z_{n})}I_{[T=n]}$ is $\mathcal{F}_{n}$%
-measurable. Therefore by employing (\ref{eq11}) with respect to the
measures $\mathbb{P}$ and $\mathbb{\tilde{P}}_{w_{u}}$ we get 
\begin{equation*}
\mathbb{E}(u^{T}e^{(x-w_{u})S_{T}}e^{w_{u}S_{T}})=\mathbb{\tilde{E}}%
_{w_{u}}(u^{T}e^{(x-w_{u})S_{T}}\mathbb{E}(e^{w_{u}Z})^{T})
\end{equation*}%
which readily leads to (\ref{eq14}) since $(u\mathbb{E}(e^{w_{u}Z}))^{T}=1.$
\end{proof}

It is worth mentioning that, more generally, we can similarly get from (\ref%
{eq11}) that,%
\begin{equation}
\mathbb{E}(Yu^{T}e^{wS_{T}}I_{[T<\infty ]})=\mathbb{\tilde{E}}_{w}(Y(u%
\mathbb{E}(e^{wZ}))^{T}I_{[T<\infty ]})  \label{eq20a}
\end{equation}%
and 
\begin{equation}
\mathbb{E}(Yu^{T}e^{xS_{T}}I_{[T<\infty ]})=\mathbb{\tilde{E}}%
_{w_{u}}(Ye^{(x-w_{u})S_{T}}I_{[T<\infty ]}),  \label{eq20}
\end{equation}%
where $Y$ is a rv such that $YI_{[T=n]}$ is $\mathcal{F}_{n}$-measurable.
The above corollaries imply that, under appropriate conditions, the
distribution of $S_{T}$ uniquely determines the distribution of the stopping
time $T$ and vice versa. Two applications illustrating this fact are
presented in the following section.

\bigskip

\section{Applications}

\subsection{The distribution of the first exit time of a random walk}

Let $Z_{1},Z_{2},...$ be a sequence of (non-degenerate) iid rv's
representing the consecutive jumps of a random walk $S_{n},n=1,2,..$ , that
is, $S_{n}=Z_{1}+Z_{2}+...+Z_{n}.$ Define also the following stopping time%
\begin{equation*}
T=\inf \{n:S_{n}\geq b~\text{or }S_{n}\leq -a\}
\end{equation*}%
for some $a,b>0$. Obviously, $T$ expresses the steps of the random walk\
until it exits the set $(-a,b).$ It can be easily verified that $\mathbb{E}%
(T)<\infty $ (e.g. see Karlin and Taylor (1975), p.264) and thus $T$ is
finite a.s.

Probabilities regarding the first passage, or boundary crossing times arise
in a variety of contexts in applied probability and statistics, such as
sequential analysis, ruin theory, queueing theory, stochastic finance etc.
Usually, it is of interest to evaluate the probability $\mathbb{P}(S_{T}\geq
b)=1-\mathbb{P}(S_{T}\leq -a)$, the distribution of $T$ and $\mathbb{E}%
(T),V(T)$.

In order to illustrate the applicability of identities (\ref{eq13}) and (\ref%
{eq14}) we consider first the case when the jumps $Z_{i}$ are exponentially
distributed (negative or positive with probabilities $p$ and $1-p$
respectively) and deduce explicit formulae for the pgf $\mathbb{E}(u^{T}),$
the joint gf $\mathbb{E}(u^{T}e^{xS_{T}})$, the conditional pgf $\mathbb{E}%
\left( u^{T}|S_{T}\geq b\right) $ and the expected values $\mathbb{E}(T)$
and $\mathbb{E}(T|S_{T}\geq b).$ We also consider the case $a=\infty $
corresponding to a random walk with only an upper barrier, which requires a
different treatment (in this case $T$ is not always a.s. finite).

\subsubsection{Random walk with exponentially distributed up and down steps}

\textbf{(a)}\ Denote by $\mathcal{E}(\theta )$ the exponential distribution
with parameter $\theta >0$. For $i=1,2,...$ , let 
\begin{equation*}
Z_{i}=\left \{ 
\begin{array}{c}
X_{i}~\text{with probability }p~~~~ \\ 
-Y_{i}~\text{with probability }1-p%
\end{array}%
\right.
\end{equation*}%
where $X_{1},X_{2},...$ and $Y_{1},Y_{2},...$ are two sequences of iid rv's
such that $X_{i}\sim \mathcal{E}(\theta _{1}),$ $Y_{i}\sim \mathcal{E}%
(\theta _{2})$. It follows that the pdf $f$ of each $Z_{i}$ is the mixture, $%
f(x)=pf_{1}(x)+(1-p)f_{2}(-x),$ where $f_{i}(x)=\theta _{i}e^{-\theta
_{i}x},~x\geq 0,$ and moment generating function (mgf) given by%
\begin{equation*}
\mathbb{E}(e^{wZ})=p\int_{-\infty }^{\infty
}e^{wx}f_{1}(x)dx+(1-p)\int_{-\infty }^{\infty }e^{wx}f_{2}(-x)dx=\frac{%
p\theta _{1}}{\theta _{1}-w}+\frac{(1-p)\theta _{2}}{\theta _{2}+w}.
\end{equation*}%
Initially, we find the probability $\mathbb{P}(S_{T}\geq b)$ via Wald's
Identity by using a standard technique (see e.g. Karlin and Taylor (1975),
p.265). It can be verified that for $w^{\ast }=(1-p)\theta _{1}-p\theta _{2}$
we have that $\mathbb{E}(e^{w^{\ast }Z})=1,$ and therefore from (\ref{eq13})
(or from (\ref{eq26})) we get $\mathbb{E}(e^{w^{\ast }S_{T}})=\mathbb{\tilde{%
E}}_{w^{\ast }}(\mathbb{E}(e^{w^{\ast }Z})^{T})=\mathbb{\tilde{E}}_{w^{\ast
}}(1^{T})=1.$ Hence, it follows that%
\begin{equation*}
1=\mathbb{E}(e^{w^{\ast }S_{T}})=\mathbb{E}\left( e^{w^{\ast
}S_{T}}|S_{T}\geq b\right) \mathbb{P}(S_{T}\geq b)+\mathbb{E}\left(
e^{w^{\ast }S_{T}}|S_{T}\leq -a\right) (1-\mathbb{P}(S_{T}\geq b))
\end{equation*}%
and by solving with respect to $\mathbb{P}(S_{T}\geq b)$ we get 
\begin{equation}
\mathbb{P}(S_{T}\geq b)=\frac{1-\mathbb{E}\left( e^{w^{\ast
}S_{T}}|S_{T}\leq -a\right) }{\mathbb{E}\left( e^{w^{\ast }S_{T}}|S_{T}\geq
b\right) -\mathbb{E}\left( e^{w^{\ast }S_{T}}|S_{T}\leq -a\right) }.
\label{eq21}
\end{equation}%
Invoking the memoryless property of the exponential distribution we have that%
\begin{eqnarray}
\mathbb{E}\left( e^{wS_{T}}|S_{T}\geq b\right) &=&e^{wb}\mathbb{E}\left(
e^{w(S_{T}-b)}|S_{T}-b\sim \mathcal{E}(\theta _{1})\right) =\frac{\theta _{1}%
}{\theta _{1}-w}e^{wb},  \label{eq15} \\
\mathbb{E}\left( e^{wS_{T}}|S_{T}\leq -a\right) &=&e^{-wa}\mathbb{E}\left(
e^{-w(-a-S_{T})}|-a-S_{T}\sim \mathcal{E}(\theta _{2})\right) =\frac{\theta
_{2}}{w+\theta _{2}}e^{-wa},  \notag
\end{eqnarray}%
and combining the above we deduce that for $w^{\ast }\neq 0,$ 
\begin{equation}
\mathbb{P}\left( S_{T}\geq b\right) =\frac{1-\frac{\theta _{2}e^{-w^{\ast }a}%
}{w^{\ast }+\theta _{2}}}{\frac{\theta _{1}e^{w^{\ast }b}}{\theta
_{1}-w^{\ast }}-\frac{\theta _{2}e^{-w^{\ast }a}}{w^{\ast }+\theta _{2}}}=%
\frac{1-\frac{\theta _{2}e^{-((1-p)\theta _{1}-p\theta _{2})a}}{(1-p)(\theta
_{1}+\theta _{2})}}{\frac{\theta _{1}e^{((1-p)\theta _{1}-p\theta _{2})b}}{%
p(\theta _{1}+\theta _{2})}-\frac{\theta _{2}e^{-((1-p)\theta _{1}-p\theta
_{2})a}}{(1-p)(\theta _{1}+\theta _{2})}}.  \label{eq16}
\end{equation}%
For $w^{\ast }=0$ (i.e. the case where $(1-p)\theta _{1}=p\theta _{2}$) we
can take $w^{\ast }\rightarrow 0$ in the above formula and subsequently
deduce that $\mathbb{P}(S_{T}\geq b)=\frac{\theta _{1}+a\theta _{1}\theta
_{2}}{\theta _{1}+\theta _{2}+(b+a)\theta _{1}\theta _{2}}.$

Next, we derive the mgf of $T$ by employing Corollary \ref{cor2}. A solution 
$w_{u}$ of the equation $\mathbb{E}(e^{wZ})=u^{-1}$ with respect to $w$ is%
\begin{equation}
w_{u}=\tfrac{\theta _{1}-\theta _{2}+u((1-p)\theta _{2}-p\theta _{1})+\sqrt{%
(\theta _{1}-\theta _{2}+u((1-p)\theta _{2}-p\theta _{1}))^{2}+4(1-u)\theta
_{1}\theta _{2}}}{2}.  \label{eq17}
\end{equation}%
The function $w_{u}$ is strictly decreasing for $u\in \lbrack 0,1]$ with $%
w_{0}=\theta _{1},w_{1}=\max \{0,(1-p)\theta _{1}-p\theta _{2}\}$ and thus $%
0<w_{u}<\theta _{1}$ for $u\in (0,1).$ Under the measure $\mathbb{\tilde{P}}%
_{w},$ the pdf $f$ of each $Z_{i}$ takes on the form 
\begin{eqnarray*}
f_{w}(x) &=&\frac{e^{wx}f(x)}{\mathbb{E}(e^{wZ})}=\frac{e^{wx}(p\theta
_{1}e^{-\theta _{1}x}I_{[x\geq 0]}+(1-p)\theta _{2}e^{\theta _{2}x}I_{[x<0]})%
}{\frac{p\theta _{1}}{\theta _{1}-w}+\frac{(1-p)\theta _{2}}{\theta _{2}+w}}
\\
&=&\left \{ 
\begin{array}{c}
c_{w}(\theta _{1}-w)e^{-(\theta _{1}-w)x},~~~~x\geq 0~~~~~~~ \\ 
(1-c_{w})(\theta _{2}+w)e^{-(\theta _{2}+w)(-x)},~x<0%
\end{array}%
\right.
\end{eqnarray*}%
where $c_{w}=\frac{p\theta _{1}}{\theta _{1}-w}(\frac{p\theta _{1}}{\theta
_{1}-w}+\frac{(1-p)\theta _{2}}{\theta _{2}+w})^{-1}~(0<c_{w}<1$ for $%
-\theta _{2}<w<\theta _{1}).$ Hence, under $\mathbb{\tilde{P}}_{w_{u}},u\in
(0,1),$ we still have exponentially distributed up and down jumps, but now
the parameters $p,~\theta _{1}$ and $\theta _{2}$ are substituted by $%
c_{w_{u}}=\frac{up\theta _{1}}{\theta _{1}-w_{u}},(\theta _{1}-w_{u}),$ and $%
(\theta _{2}+w_{u})$ respectively. Again, $T$ is finite $\mathbb{\tilde{P}}%
_{w}$-a.s. Using Corollary \ref{cor2} and (\ref{eq15}) it follows that 
\begin{eqnarray}
\mathbb{E}(u^{T}) &=&\mathbb{\tilde{E}}_{w_{u}}\left( e^{-w_{u}S_{T}}\right)
\notag \\
&=&\mathbb{\tilde{E}}_{w_{u}}\left( e^{-w_{u}S_{T}}|S_{T}\geq b\right) 
\mathbb{\tilde{P}}_{w_{u}}(S_{T}\geq b)+\mathbb{\tilde{E}}_{w_{u}}\left(
e^{-w_{u}S_{T}}|S_{T}\leq -a\right) (1-\mathbb{\tilde{P}}_{w_{u}}(S_{T}\geq
b))  \notag \\
&=&\frac{(\theta _{1}-w_{u})e^{-w_{u}b}}{(\theta _{1}-w_{u})+w_{u}}\mathbb{%
\tilde{P}}_{w_{u}}(S_{T}\geq b)+\frac{(\theta _{2}+w_{u})e^{w_{u}a}}{(\theta
_{2}+w_{u})-w_{u}}(1-\mathbb{\tilde{P}}_{w_{u}}(S_{T}\geq b)).  \label{eq18}
\end{eqnarray}%
Also, using (\ref{eq16}) under the probability measure $\mathbb{\tilde{P}}%
_{w_{u}}$, we get 
\begin{equation}
\mathbb{\tilde{P}}_{w_{u}}\left( S_{T}\geq b\right) =\frac{1-\frac{\theta
_{2}+w_{u}}{(1-c_{w_{u}})(\theta _{1}+\theta _{2})}e^{-\beta _{u}a}}{\frac{%
\theta _{1}-w_{u}}{c_{w_{u}}(\theta _{1}+\theta _{2})}e^{\beta _{u}b}-\frac{%
\theta _{2}+w_{u}}{(1-c_{w_{u}})(\theta _{1}+\theta _{2})}e^{-\beta _{u}a}}
\label{eq19}
\end{equation}%
where $\beta _{u}=(1-c_{w_{u}})(\theta _{1}-w_{u})-c_{w_{u}}(\theta
_{2}+w_{u}).$

Combining (\ref{eq18}) and (\ref{eq19}) we deduce the following proposition.

\begin{proposition}
\label{prop1}Let $S_{n},n=1,2,...$ be a random walk with step distribution $%
F(x)=pF_{1}(x)+(1-p)F_{2}(x),$ where $F_{i}\sim \mathcal{E}(\theta
_{i}),i=1,2,~p\in (0,1).$ If $T$ denotes the time until the random walk
exits $(-a,b),a,b>0$ then the probability generating function of $T$ is
given by 
\begin{equation*}
\mathbb{E}(u^{T})=\frac{\left( \frac{(\theta _{1}-w_{u})}{\theta
_{1}e^{w_{u}b}}-\frac{(\theta _{2}+w_{u})e^{w_{u}a}}{\theta _{2}}\right)
\left( 1-\frac{(\theta _{2}+w_{u})^{2}e^{-\beta _{u}a}}{u(1-p)\theta
_{2}(\theta _{1}+\theta _{2})}\right) }{\frac{(\theta
_{1}-w_{u})^{2}e^{\beta _{u}b}}{up\theta _{1}(\theta _{1}+\theta _{2})}-%
\frac{(\theta _{2}+w_{u})^{2}e^{-\beta _{u}a}}{u(1-p)\theta _{2}(\theta
_{1}+\theta _{2})}}+\frac{(\theta _{2}+w_{u})e^{w_{u}a}}{\theta _{2}},~~u\in
(0,1)
\end{equation*}%
where 
\begin{eqnarray}
\beta _{u} &=&-\sqrt{(\theta _{1}-\theta _{2}+u((1-p)\theta _{2}-p\theta
_{1}))^{2}+4(1-u)\theta _{1}\theta _{2}},  \label{eq27} \\
w_{u} &=&\tfrac{1}{2}\left( \theta _{1}-\theta _{2}+u((1-p)\theta
_{2}-p\theta _{1})-\beta _{u}\right) .  \notag
\end{eqnarray}
\end{proposition}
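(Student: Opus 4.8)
The plan is to assemble the stated generating function directly from the two displays (\ref{eq18}) and (\ref{eq19}) derived just above the proposition, so that the substantive content is the translation of Corollary \ref{cor2} into the exponential setting followed by an algebraic simplification. First I would invoke Corollary \ref{cor2}, which is legitimate because the text has already exhibited a root $w_u$ of $\mathbb{E}(e^{wZ})=u^{-1}$ and verified that $T$ is finite a.s. under both $\mathbb{P}$ and $\mathbb{\tilde{P}}_{w_u}$; this yields $\mathbb{E}(u^{T})=\mathbb{\tilde{E}}_{w_u}(e^{-w_u S_T})$. Splitting the expectation over the disjoint exit events $\{S_T\ge b\}$ and $\{S_T\le -a\}$ produces the two-term decomposition that is the skeleton of the final formula.

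Next I would evaluate the two conditional expectations and the mixing weight. Under $\mathbb{\tilde{P}}_{w_u}$ the increments remain exponential up/down jumps, now with rates $\theta_1-w_u$ and $\theta_2+w_u$ and up-probability $c_{w_u}=up\theta_1/(\theta_1-w_u)$, as computed just before the proposition; I would restrict to $u\in(0,1)$, where $0<w_u<\theta_1$, so that these rates are positive and the tilted measure is genuine. Applying the memoryless identity (\ref{eq15}) with $w$ replaced by $-w_u$ and with the tilted rates gives the overshoot factors $(\theta_1-w_u)e^{-w_u b}/\theta_1$ and $(\theta_2+w_u)e^{w_u a}/\theta_2$ appearing in (\ref{eq18}), after simplifying the denominators via $(\theta_1-w_u)+w_u=\theta_1$ and $(\theta_2+w_u)-w_u=\theta_2$. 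For the weight $\mathbb{\tilde{P}}_{w_u}(S_T\ge b)$ I would reuse the exit-probability formula (\ref{eq16}) with $p,\theta_1,\theta_2$ replaced by $c_{w_u},\theta_1-w_u,\theta_2+w_u$, which is exactly (\ref{eq19}); substituting $c_{w_u}$ and $1-c_{w_u}=u(1-p)\theta_2/(\theta_2+w_u)$ converts the factors $\tfrac{\theta_1-w_u}{c_{w_u}(\theta_1+\theta_2)}$ and $\tfrac{\theta_2+w_u}{(1-c_{w_u})(\theta_1+\theta_2)}$ into $\tfrac{(\theta_1-w_u)^2}{up\theta_1(\theta_1+\theta_2)}$ and $\tfrac{(\theta_2+w_u)^2}{u(1-p)\theta_2(\theta_1+\theta_2)}$, matching the numerator and denominator of the asserted fraction.

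Finally I would substitute this simplified weight into (\ref{eq18}), collect the coefficient of $\mathbb{\tilde{P}}_{w_u}(S_T\ge b)$ as the bracketed difference of the two overshoot factors, and retain the additive constant $(\theta_2+w_u)e^{w_u a}/\theta_2$, which reproduces the displayed expression. The main bookkeeping obstacle is reconciling the two descriptions of $\beta_u$: it enters (\ref{eq19}) through the dynamical form $\beta_u=(1-c_{w_u})(\theta_1-w_u)-c_{w_u}(\theta_2+w_u)$, whereas the proposition states it as the negative square root in (\ref{eq27}). I would bridge these by clearing denominators in the dynamical form and invoking the quadratic $w_u^2-(\theta_1-\theta_2+u((1-p)\theta_2-p\theta_1))w_u-(1-u)\theta_1\theta_2=0$ satisfied by $w_u$ (equivalently (\ref{eq17})), which shows $\beta_u=-\sqrt{(\theta_1-\theta_2+u((1-p)\theta_2-p\theta_1))^2+4(1-u)\theta_1\theta_2}$ and $w_u=\tfrac{1}{2}(\theta_1-\theta_2+u((1-p)\theta_2-p\theta_1)-\beta_u)$, confirming the branch with $0<w_u<\theta_1$. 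Everything beyond this identification is routine simplification.
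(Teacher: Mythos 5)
Your proposal is correct and follows essentially the same route as the paper: invoke Corollary \ref{cor2} to write $\mathbb{E}(u^{T})=\mathbb{\tilde{E}}_{w_{u}}(e^{-w_{u}S_{T}})$, split over the exit events using the memoryless overshoot identities (\ref{eq15}) under the tilted measure (which is exactly (\ref{eq18})), and substitute the tilted exit probability (\ref{eq19}) with $c_{w_{u}}=up\theta _{1}/(\theta _{1}-w_{u})$ and $1-c_{w_{u}}=u(1-p)\theta _{2}/(\theta _{2}+w_{u})$. The only point where you go beyond the paper's terse ``combining (\ref{eq18}) and (\ref{eq19})'' is the explicit reconciliation, via the quadratic satisfied by $w_{u}$, of the dynamical form $\beta _{u}=(1-c_{w_{u}})(\theta _{1}-w_{u})-c_{w_{u}}(\theta _{2}+w_{u})$ with the square-root form (\ref{eq27}) --- a detail the paper leaves implicit, and which you handle correctly.
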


Note that, for the special case $p=\frac{\theta _{1}}{\theta _{1}+\theta _{2}%
},$ the above generating function can also be derived by employing results
established by Khan (2008).

Apart\textbf{\ }from\textbf{\ }its theoretical interest, the above formula
can also be used for the numerical determination of the distribution of $T$
for given values of the parameters $\theta _{1},\theta _{2},p,a$ and $b,$
since%
\begin{equation}
\mathbb{P}(T=m)=\frac{1}{m!}\left. \frac{d^{m}}{du^{m}}(\mathbb{E}%
(u^{T}))\right\vert _{u=0}.  \label{eq25}
\end{equation}%
In practice, this can be easily accomplished by the use of appropriate
mathematical software (e.g. using the function \texttt{SeriesCoefficient} of
Wolfram Mathematica). In Figure 1 the distribution of $T$ has been pictured
for two sets of values of the parameters. The height of the bars represent
the probabilities $\mathbb{P}(T=m),m=0,1,...,50,$ while the small dots show
the corresponding probabilities estimated by Monte Carlo simulation after $%
10^{5}$ iterations.

\begin{figure}[h]
\begin{center}
\includegraphics[scale=0.3]{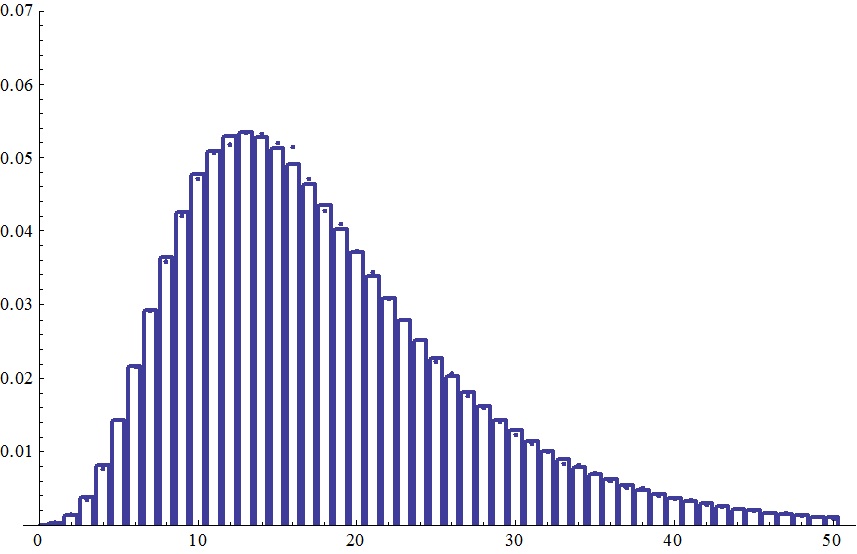} \includegraphics[scale=0.3]{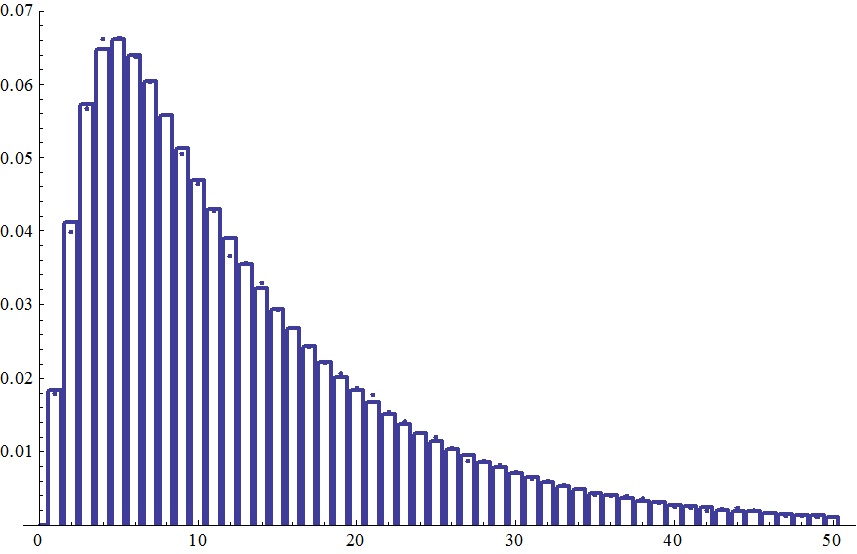}
\end{center}
\caption{The probability mass function of $T$ (${\protect\small p=1/3,}$ $%
{\protect\small \protect\theta }_{1}{\protect\small =2,}$ ${\protect\small 
\protect\theta }_{2}{\protect\small =1,}$ ${\protect\small a=8,}$ $%
{\protect\small b=6}$ and ${\protect\small p=1/2,\protect\theta }_{1}%
{\protect\small =1,\protect\theta }_{2}{\protect\small =1,a=4,b=4}$)}
\end{figure}

An explicit formula for $\mathbb{E}(T)$\ can be easily derived by
differentiating $\mathbb{E}(u^{T})$ given in Proposition \ref{prop1}, with
respect to $u$ and taking $u\rightarrow 1$. The details are left to the
reader.

Employing Corollary \ref{cor2}, we can derive the joint gf of $T$ and $S_{T}$%
, which yields 
\begin{eqnarray*}
\mathbb{E}(u^{T}e^{xS_{T}}) &=&\mathbb{\tilde{E}}_{w_{u}}\left(
e^{xS_{T}}e^{-w_{u}S_{T}}\right) \\
&=&\mathbb{\tilde{E}}_{w_{u}}\left( e^{(x-w_{u})S_{T}}|S_{T}\geq b\right) 
\mathbb{\tilde{P}}_{w_{u}}(S_{T}\geq b)+\mathbb{\tilde{E}}_{w_{u}}\left(
e^{(x-w_{u})S_{T}}|S_{T}\leq -a\right) (1-\mathbb{\tilde{P}}%
_{w_{u}}(S_{T}\geq b)) \\
&=&\frac{(\theta _{1}-w_{u})e^{(x-w_{u})b}}{(\theta _{1}-w_{u})-(x-w_{u})}%
\mathbb{\tilde{P}}_{w_{u}}(S_{T}\geq b)+\frac{(\theta
_{2}+w_{u})e^{(w_{u}-x)a}}{(x-w_{u})+(\theta _{2}+w_{u})}(1-\mathbb{\tilde{P}%
}_{w_{u}}(S_{T}\geq b))
\end{eqnarray*}%
where $\mathbb{\tilde{P}}_{w_{u}}(S_{T}\geq b)$ and $w_{u}$ are given above.

Moreover, for the pgf of the conditional distribution of $T$, given that the
random walk crossed the upper boundary, we observe that (\ref{eq20}) with $%
Y=I_{[S_{T}\geq b]},~x=0,$ leads to%
\begin{eqnarray*}
\mathbb{E}\left( u^{T}|S_{T}\geq b\right) \mathbb{P}\left( S_{T}\geq
b\right) &=&\mathbb{E}(u^{T}I_{[S_{T}\geq b]})=\mathbb{E}_{w_{u}}\left(
e^{-w_{u}S_{T}}I_{[S_{T}\geq b]}\right) \\
&=&\mathbb{\tilde{E}}_{w_{u}}\left( e^{-w_{u}S_{T}}|S_{T}\geq b\right) 
\mathbb{\tilde{P}}_{w_{u}}(S_{T}\geq b).
\end{eqnarray*}%
Therefore we deduce the following result.

\begin{proposition}
\label{prop2}Let $S_{n},n=1,2,...$ be a random walk with step distribution $%
F(x)=pF_{1}(x)+(1-p)F_{2}(x),$ where $F_{i}\sim \mathcal{E}(\theta
_{i}),i=1,2.$ If $T$ denotes the time until the random walk exits $%
(-a,b),a,b>0$ then the conditional pgf of $T$ given that $S_{T}\geq b$, is 
\begin{equation*}
\mathbb{E}\left( u^{T}|S_{T}\geq b\right) =\frac{(\theta
_{1}-w_{u})e^{-w_{u}b}}{\theta _{1}}\cdot \frac{\mathbb{\tilde{P}}%
_{w_{u}}(S_{T}\geq b)}{\mathbb{P}(S_{T}\geq b)},~~u\in (0,1)
\end{equation*}%
where $w_{u},\mathbb{P}(S_{T}\geq b)$ and $\mathbb{\tilde{P}}%
_{w_{u}}(S_{T}\geq b),$ are as in (\ref{eq27}),(\ref{eq16}) and (\ref{eq19})
respectively.
\end{proposition}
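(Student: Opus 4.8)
The plan is to start from the identity already displayed immediately above the statement, obtained by specializing (\ref{eq20}) to $Y=I_{[S_{T}\geq b]}$ and $x=0$, namely
\[
\mathbb{E}\left( u^{T}|S_{T}\geq b\right) \mathbb{P}\left( S_{T}\geq b\right) =\mathbb{\tilde{E}}_{w_{u}}\left( e^{-w_{u}S_{T}}|S_{T}\geq b\right) \mathbb{\tilde{P}}_{w_{u}}(S_{T}\geq b).
\]
Once this is in hand, the whole proposition reduces to evaluating the single quantity $\mathbb{\tilde{E}}_{w_{u}}(e^{-w_{u}S_{T}}|S_{T}\geq b)$ and dividing through by $\mathbb{P}(S_{T}\geq b)$; the ratio $\mathbb{\tilde{P}}_{w_{u}}(S_{T}\geq b)/\mathbb{P}(S_{T}\geq b)$ is simply carried along unchanged into the final expression.

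For the key computation I would invoke the form of the step distribution under the tilted measure established in preparing (\ref{eq18})--(\ref{eq19}): under $\mathbb{\tilde{P}}_{w_{u}}$ the walk still has exponential up and down jumps, but with the up-parameter $\theta_{1}$ replaced by $\theta_{1}-w_{u}$. Consequently, on the event $[S_{T}\geq b]$ the overshoot $S_{T}-b$ is, by the memoryless property of the exponential law, distributed as $\mathcal{E}(\theta_{1}-w_{u})$ under $\mathbb{\tilde{P}}_{w_{u}}$. This is exactly the tilted analogue of (\ref{eq15}): replacing $\theta_{1}$ by $\theta_{1}-w_{u}$ there and evaluating at $w=-w_{u}$ gives
\[
\mathbb{\tilde{E}}_{w_{u}}\left( e^{-w_{u}S_{T}}|S_{T}\geq b\right) =e^{-w_{u}b}\cdot \frac{\theta_{1}-w_{u}}{(\theta_{1}-w_{u})+w_{u}}=\frac{(\theta_{1}-w_{u})e^{-w_{u}b}}{\theta_{1}}.
\]
Substituting this into the starting identity and dividing by $\mathbb{P}(S_{T}\geq b)$ produces the claimed formula directly.

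The only points requiring care are, first, justifying the conditioning and the memoryless argument, which is legitimate because the up-jumps are genuinely exponential under the tilted measure (with $0<w_{u}<\theta_{1}$ for $u\in(0,1)$, as noted earlier, so $\theta_{1}-w_{u}>0$) and because $T$ remains finite $\mathbb{\tilde{P}}_{w_{u}}$-a.s.; and second, confirming convergence of the relevant expectation, which is immediate since $e^{-w_{u}(S_{T}-b)}\leq 1$ on $[S_{T}\geq b]$. I do not anticipate any genuine obstacle here: all the substantive ingredients—Corollary \ref{cor2}, the tilted step law, and the overshoot distribution—have already been assembled, so this result is essentially a one-line consequence of (\ref{eq20}) together with the tilted version of (\ref{eq15}).
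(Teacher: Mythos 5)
Your proposal is correct and takes essentially the same route as the paper: the paper's own derivation consists of specializing (\ref{eq20}) to $Y=I_{[S_{T}\geq b]}$, $x=0$, and then evaluating $\mathbb{\tilde{E}}_{w_{u}}\left(e^{-w_{u}S_{T}}|S_{T}\geq b\right)=\frac{(\theta_{1}-w_{u})e^{-w_{u}b}}{\theta_{1}}$ via the memoryless property under the tilted measure (the same term already appearing in (\ref{eq18})), exactly as you do. Your added remarks on $0<w_{u}<\theta_{1}$ and on convergence merely make explicit what the paper leaves implicit.
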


Proposition \ref{prop2} along with (\ref{eq25}) can be used for the
calculation of the conditional probabilities $h(m)=\mathbb{P}\left(
T=m|S_{T}\geq b\right) $. In Figure 2, which was constructed similarly to
Figure 1, we have plotted the conditional distribution of $T$ for two sets
of values of the parameters$.$

\begin{figure}[h]
\begin{center}
\includegraphics[scale=0.4]{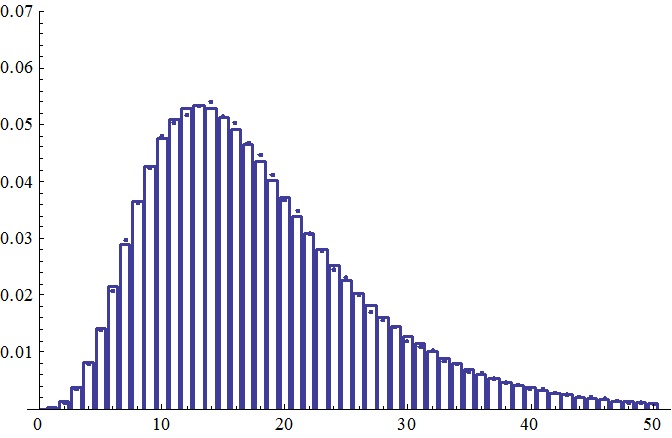} \includegraphics[scale=0.4]{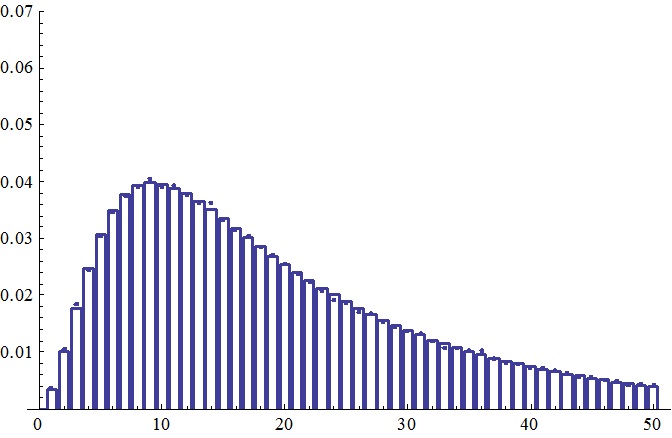}
\end{center}
\caption{The conditional probability mass function $h$ of $T,$ given that $%
S_{T}\geq b.$ (${\protect\small p=2/3,}$ ${\protect\small \protect\theta }%
_{1}{\protect\small =1,}$ ${\protect\small \protect\theta }_{2}%
{\protect\small =2,}$ ${\protect\small a=6,}$ ${\protect\small b=8}$ and $%
{\protect\small p=1/2,\protect\theta }_{1}{\protect\small =1;\protect\theta }%
_{2}{\protect\small =2,a=5,b=5}$)}
\end{figure}

Finally, it is worth mentioning that when the $Z_{i}$'s follow a Laplace
distribution (i.e., $\theta _{1}=\theta _{2}=\theta ,p=1/2)\ $the pgf of $T$
takes on the simple form%
\begin{equation*}
\mathbb{E}(u^{T})=\frac{(e^{a\theta \tilde{u}}+e^{b\theta \tilde{u}})u}{1-%
\tilde{u}+\left( 1+\tilde{u}\right) e^{(a+b)\theta \tilde{u}}}
\end{equation*}%
where $\tilde{u}=\sqrt{1-u}.$ Also, $\mathbb{E}(u^{T}e^{xS_{T}})$ now
simplifies to%
\begin{equation*}
\mathbb{E}(u^{T}e^{xS_{T}})=\frac{(\frac{\theta (1-\tilde{u})e^{(x-\theta 
\tilde{u})b}}{\theta -x}-\frac{\theta (1+\tilde{u})e^{(\theta \tilde{u}-x)a}%
}{x+\theta })(\theta u-(1+\tilde{u})^{2}e^{2\theta \tilde{u}a})}{(1-\tilde{u}%
)^{2}e^{-2\theta \tilde{u}b}-(1+\tilde{u})^{2}e^{2\theta \tilde{u}a}}+\tfrac{%
\theta (1+\tilde{u})}{x+\theta }e^{(\theta \tilde{u}-x)a},
\end{equation*}%
while the conditional pgf of $T$ now reads%
\begin{equation*}
\mathbb{E}\left( u^{T}|S_{T}\geq b\right) =\frac{e^{\theta b\tilde{u}%
}(2+a\theta +b\theta )(1-\tilde{u})(-u+e^{2a\theta \tilde{u}}(2-u+2\tilde{u}%
))}{(1+a\theta )((1-e^{2(a+b)\theta \tilde{u}})(u-2)+2(1+e^{2(a+b)\theta 
\tilde{u}})\tilde{u}}.
\end{equation*}%
Finally, by differentiating $\mathbb{E}(u^{T})$ and $\mathbb{E}\left(
u^{T}|S_{T}\geq b\right) $, with respect to $u$, taking $u\longrightarrow 1$
and after some algebraic manipulations, we may also easily\textbf{\ }derive
explicit formulae for $\mathbb{E}(T)$, $V(T)$\ and $\mathbb{E}\left(
u^{T}|S_{T}\geq b\right) $.

\textbf{(b)} We consider again the random walk $Z_{1},Z_{2},...$ discussed
in (a) with $a=\infty $ (i.e. now there exists only an upper barrier), that
is $T$ denotes the waiting time (steps) until the random walk crosses $b>0$.
Exploiting the results of Section 2, we find the probability $\mathbb{P}%
(T<\infty )$ and the conditional pgf of $T$ given that $T<\infty $. In this
case, $\mathbb{P}(T<\infty )=1$ only when the mean step $\mathbb{E}(Z)=\frac{%
p}{\theta _{1}}-\frac{1-p}{\theta _{2}}$ is positive. We conveniently
observe that the mean step under the probability measure $\mathbb{\tilde{P}}%
_{w_{u}}$ is always positive, that is, 
\begin{equation*}
\mathbb{\tilde{E}}_{w_{u}}(Z)=\frac{c_{w_{u}}}{\theta _{1}-w_{u}}-\frac{%
1-c_{w_{u}}}{\theta _{2}+w_{u}}=\frac{p\theta _{1}u}{(\theta _{1}-w_{u})^{2}}%
-\frac{(1-p)\theta _{2}u}{(\theta _{2}+w_{u})^{2}}>0,
\end{equation*}%
for all $u\in (0,1).$ This can be justified as follows: Note first that $%
w_{u}$ is strictly decreasing for $u\in \lbrack 0,1]$ with $w_{0}=\theta
_{1} $ and $w_{1}=\max \{0,(1-p)\theta _{1}-p\theta _{2}\}.$ It suffices to
show that $g(w_{u})>0,u\in (0,1),$ where $g(x)=(\theta _{2}+x)^{2}p\theta
_{1}-(\theta _{1}-x)^{2}(1-p)\theta _{2}.$ The function $g(x)$ is strictly
increasing in $[0,\theta _{1}]$ ($g^{\prime }(x)>0$ for $x\in \lbrack
0,\theta _{1}])$. We examine the following three cases:

\begin{enumerate}
\item[(i)] If $p\theta _{2}-(1-p)\theta _{1}>0$, then $w_{1}=0$ and hence $%
g(w_{u})>g(w_{1})=g(0)=(p\theta _{2}-(1-p)\theta _{1})\theta _{1}\theta
_{2}>0.$

\item[(ii)] If $p\theta _{2}-(1-p)\theta _{1}<0,$ then $w_{1}=(1-p)\theta
_{1}-p\theta _{2}>0$ and hence $g(w_{u})>g(w_{1})=p(1-p)(\theta _{2}+\theta
_{1})^{2}\left( (1-p)\theta _{1}-p\theta _{2}\right) >0.$

\item[(iii)] If $p\theta _{2}-(1-p)\theta _{1}=0,$ then directly, $\mathbb{%
\tilde{E}}_{w_{u}}(Z)=\tfrac{uw_{u}}{(\theta _{1}-w_{u})(\theta _{2}+w_{u})}%
\left( \tfrac{\theta _{1}}{\theta _{1}-w_{u}}+\tfrac{\theta _{2}}{\theta
_{2}+w_{u}}\right) >0.$\newline
\end{enumerate}

\noindent Therefore, $\mathbb{\tilde{P}}_{w_{u}}(T<\infty )=1,u\in (0,1),$
and from relation (\ref{eq20}) we deduce that%
\begin{eqnarray*}
\mathbb{E}(u^{T}I_{[T<\infty ]}) &=&\mathbb{\tilde{E}}%
_{w_{u}}(e^{-w_{u}S_{T}}I_{[T<\infty ]})=\mathbb{\tilde{E}}%
_{w_{u}}(e^{-w_{u}S_{T}}|T<\infty )\mathbb{\tilde{P}}_{w_{u}}(T<\infty ) \\
&=&\mathbb{\tilde{E}}_{w_{u}}(e^{-w_{u}S_{T}}|T<\infty )=\frac{(\theta
_{1}-w_{u})e^{-w_{u}b}}{\theta _{1}},u\in (0,1).
\end{eqnarray*}%
Letting $u\rightarrow 1$ we get that $\mathbb{P}(T<\infty )=\frac{(\theta
_{1}-w_{1})e^{-w_{1}b}}{\theta _{1}}.$ Since $\mathbb{E}(u^{T}I_{[T<\infty
]})=\mathbb{E}(u^{T}|T<\infty )\mathbb{P}(T<\infty )$ we readily deduce the
following proposition.

\begin{proposition}
\label{prop3}Let $S_{n},n=1,2,...$ be a random walk with step distribution $%
F(x)=pF_{1}(x)+(1-p)F_{2}(x),$ where $F_{i}\sim \mathcal{E}(\theta
_{i}),i=1,2,~p\in (0,1).$ If $T$ denotes the time until the random walk
crosses $b>0,$ then the conditional pgf of $T$ given that $T<\infty $ is 
\begin{equation*}
\mathbb{E}(u^{T}|T<\infty )=\frac{\theta _{1}-w_{u}}{\theta _{1}-w_{1}}%
e^{(w_{1}-w_{u})b},~~u\in (0,1)
\end{equation*}%
where $w_{u}\ $is as in (\ref{eq27}). Moreover,%
\begin{equation*}
\mathbb{P}(T<\infty )=\left \{ 
\begin{array}{c}
\frac{p(\theta _{1}+\theta _{2})}{\theta _{1}}e^{-((1-p)\theta _{1}-p\theta
_{2})b},(1-p)\theta _{1}-p\theta _{2}\geq 0~~ \\ 
1,~~~~~~~~~~~~~~~~~~~~~~~~~~~~~(1-p)\theta _{1}-p\theta _{2}<0.%
\end{array}%
\right.
\end{equation*}
\end{proposition}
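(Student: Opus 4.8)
The plan is to obtain both assertions of Proposition \ref{prop3} from the single identity (\ref{eq20}) specialised to $Y=1$, $x=0$. With this choice (\ref{eq20}) reads
\begin{equation*}
\mathbb{E}(u^{T}I_{[T<\infty ]})=\mathbb{\tilde{E}}_{w_{u}}(e^{-w_{u}S_{T}}I_{[T<\infty ]}),\quad u\in (0,1),
\end{equation*}
where $w_{u}$ is the root (\ref{eq27}) of $\mathbb{E}(e^{wZ})=u^{-1}$. The reason for tilting to $\mathbb{\tilde{P}}_{w_{u}}$ is that under $\mathbb{P}$ the walk may drift downward, so $\{T=\infty\}$ can carry positive mass and the clean Corollary \ref{cor2} does not apply; the tilted walk, however, always drifts upward. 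I would establish $\mathbb{\tilde{E}}_{w_{u}}(Z)>0$ for every $u\in(0,1)$ (the three-case sign analysis), conclude $\mathbb{\tilde{P}}_{w_{u}}(T<\infty)=1$, and thereby drop the indicator on the right, reducing it to $\mathbb{\tilde{E}}_{w_{u}}(e^{-w_{u}S_{T}})$.

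Next I would evaluate this tilted expectation by the overshoot. Since there is no lower barrier and the tilted walk is a.s. finite, it exits upward through an up-jump, and under $\mathbb{\tilde{P}}_{w_{u}}$ the up-jumps are $\mathcal{E}(\theta_{1}-w_{u})$; hence, by the memoryless property, $S_{T}-b\sim\mathcal{E}(\theta_{1}-w_{u})$ (this is just (\ref{eq15}) read under $\mathbb{\tilde{P}}_{w_{u}}$). Therefore
\begin{equation*}
\mathbb{\tilde{E}}_{w_{u}}(e^{-w_{u}S_{T}})=e^{-w_{u}b}\,\mathbb{\tilde{E}}_{w_{u}}(e^{-w_{u}(S_{T}-b)})=\frac{(\theta_{1}-w_{u})e^{-w_{u}b}}{\theta_{1}},
\end{equation*}
so that $\mathbb{E}(u^{T}I_{[T<\infty ]})=(\theta_{1}-w_{u})e^{-w_{u}b}/\theta_{1}$ for $u\in(0,1)$.

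To finish I would let $u\uparrow 1$. On the left $u^{T}I_{[T<\infty]}\uparrow I_{[T<\infty]}$, so monotone convergence gives $\mathbb{E}(u^{T}I_{[T<\infty]})\to\mathbb{P}(T<\infty)$, while on the right $w_{u}\to w_{1}=\max\{0,(1-p)\theta_{1}-p\theta_{2}\}$ by continuity; this yields $\mathbb{P}(T<\infty)=(\theta_{1}-w_{1})e^{-w_{1}b}/\theta_{1}$. Dividing the two displayed identities gives the conditional pgf $\mathbb{E}(u^{T}|T<\infty)=\frac{\theta_{1}-w_{u}}{\theta_{1}-w_{1}}e^{(w_{1}-w_{u})b}$. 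The two branches of $\mathbb{P}(T<\infty)$ then follow by splitting on the sign of $(1-p)\theta_{1}-p\theta_{2}$: when it is negative, $w_{1}=0$ and $\mathbb{P}(T<\infty)=1$; when it is nonnegative, $w_{1}=(1-p)\theta_{1}-p\theta_{2}$, whence $\theta_{1}-w_{1}=p(\theta_{1}+\theta_{2})$ and the stated closed form emerges.

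I expect the one genuinely substantive step to be the finiteness claim $\mathbb{\tilde{P}}_{w_{u}}(T<\infty)=1$. Everything after it is a one-line memoryless overshoot computation and a routine $u\uparrow 1$ limit, but the whole scheme rests on the tilted walk having strictly positive drift uniformly over $u\in(0,1)$; this is exactly why the $I_{[T<\infty]}$-version (\ref{eq20}) must be used instead of Corollary \ref{cor2}, and why the sign analysis of $\mathbb{\tilde{E}}_{w_{u}}(Z)$ cannot be bypassed.
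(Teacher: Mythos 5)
Your proposal is correct and takes essentially the same route as the paper: identity (\ref{eq20}) with $Y=1$, $x=0$, positivity of the tilted drift $\mathbb{\tilde{E}}_{w_{u}}(Z)$ via the three-case sign analysis to conclude $\mathbb{\tilde{P}}_{w_{u}}(T<\infty )=1$, the memoryless overshoot evaluation $\mathbb{\tilde{E}}_{w_{u}}(e^{-w_{u}S_{T}})=(\theta _{1}-w_{u})e^{-w_{u}b}/\theta _{1}$, the limit $u\uparrow 1$ to obtain $\mathbb{P}(T<\infty )=(\theta _{1}-w_{1})e^{-w_{1}b}/\theta _{1}$, and division to get the conditional pgf. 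The only cosmetic difference is that you justify the $u\uparrow 1$ step by monotone convergence explicitly, which the paper leaves implicit.
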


By employing Proposition \ref{prop3} we can easily compute the conditional
probabilities $s(m)=\mathbb{P}(T=m|T<\infty )$ through (\ref{eq25}). In
Figure 3 the conditional probabilities $s(m)$ have been plotted for two sets
of values of the parameters.

\begin{figure}[h]
\begin{center}
\includegraphics[scale=0.4]{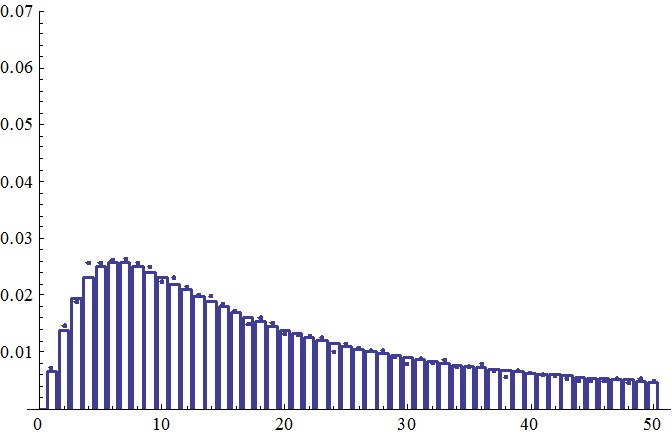} \includegraphics[scale=0.4]{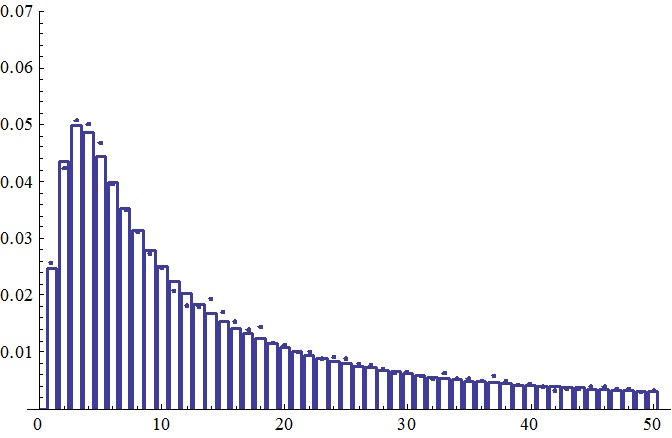}
\end{center}
\caption{The conditional probability mass function $s$ of $T,$ given that $%
T<\infty $ (${\protect\small p=0.4,}$ ${\protect\small \protect\theta }_{1}%
{\protect\small =1.5,}$ ${\protect\small \protect\theta }_{2}{\protect\small %
=2,}$ ${\protect\small b=3}$ and ${\protect\small p=1/2,\protect\theta }_{1}%
{\protect\small =1,\protect\theta }_{2}{\protect\small =1,b=3}$)}
\end{figure}

In the first case we have that $\mathbb{P}(T<\infty )=\frac{14}{15}%
e^{-3/10}\approx 0.69143,$ while in the second case $\mathbb{P}(T<\infty )=1$%
.

\bigskip

\subsection{The distribution of the total number of defective items in a
sampling system based on a $k$-run switching rule.}

In the current paragraph we present an application in acceptance sampling
which is a major component of the field of statistical process control. In
acceptance sampling we frequently deal with sampling systems/plans that have
at least two sampling levels controlled by switching rules that are based on
run and scan statistics. Two examples of such systems are the continuous
sampling plans (see, for example, Schilling and Neubauer (2009)) and the
Military Standard 105E (see, for example, Montgomery (2005)).

In acceptance sampling for attributes we take samples of fixed size
corresponding to consecutive lots of items from a manufacturing process and
we record the number $Z_{i},$ $i=1,2,...$ of non-conforming (defective)
items in the $i$-th sample. Let $c$ be the acceptance number of the
\textquotedblleft normal\textquotedblright \ sampling level, that is a lot
is rejected if the corresponding sample contains more than $c$
non-conforming items. Assume that a switch in a more \textquotedblleft
tightened\textquotedblright \ (\textquotedblleft reduced\textquotedblright )
sampling level is instituted when each one of $k$-consecutive samples have
more than (less than or equal) $c$ non-conforming items. We denote by $T$
the waiting time (i.e. number of lots)\ until the sampling level of the
inspection changes. Our aim is to obtain the joint pgf of $T$ and $S_{T}$ by
exploiting the fact that $T$ follows a known distribution. The study of the
random variable $S_{T}$ is crucial, especially under a rectifying inspection
program.

In the sequel we deal with a sampling system that begins under the normal\
sampling level and a switch is permitted only to the tightened one. More
specifically, suppose that the size of the samples is fixed and equal to $n$
and that the probability of an item being defective is equal to $p\in (0,1).$
Therefore, each $Z_{i},$ $i=1,2,...$ follows a Binomial distribution with
parameters $n,$ $p.$ The number $T$ of inspected lots until the tightened
sampling level is instituted can be expressed as%
\begin{equation*}
T=\inf \{l\geq k:Z_{l-k+1}>c,...,Z_{l}>c\}.
\end{equation*}%
The stopped sum $S_{T}=\sum_{i=1}^{T}Z_{i}$ expresses the total number of
defective items found until switching to the tightened sampling level.

Since $Z_{i}$'s are discrete rv's we can conveniently set $t=e^{w}$ in
Corollary \ref{cor1} to get the following relation for the joint pgf of $%
(T,S_{T}),$%
\begin{equation}
\mathbb{E}(u^{T}t^{S_{T}})=\mathbb{\tilde{E}}_{t}((u\mathbb{E}%
(t^{Z_{1}}))^{T}),  \label{eq23}
\end{equation}%
where $\mathbb{E}(t^{Z_{1}})=(1-p+pt)^{n}$. The distribution of the $Z_{i}$%
's under the probability measure $\mathbb{\tilde{P}}_{t}$ is 
\begin{equation*}
\mathbb{\tilde{P}}_{t}\left( Z_{i}=x\right) =\frac{t^{x}\mathbb{P}(Z_{i}=x)}{%
\mathbb{E}(t^{Z_{1}})}=\binom{n}{x}(\tfrac{pt}{1-p+pt})^{x}(\tfrac{1-p}{%
1-p+pt})^{n-x},~x=0,1,...,n.
\end{equation*}%
Therefore, under $\mathbb{\tilde{P}}_{t},$ $Z_{i}$ follows a binomial
distribution, with parameters $n$ and\thinspace 
\begin{equation}
p_{t}=\frac{pt}{1-p+pt},\text{ }t>0.  \label{eq28}
\end{equation}%
The stopping time $T$\ can be considered as the first time a success run of
length $k$ occurs in a sequence of independent trials with success
probability $q=\mathbb{P}(Z_{i}>c).$ Hence, \thinspace $T<\infty $ and the
distribution of $T$ is known as the\ geometric distribution of order\ $k$
(see, for example, Philippou et al. (1983) or Balakrishnan and Koutras
(2002)) with pgf given by,

\begin{equation}
\mathcal{M}(z,q)=\mathbb{E}(z^{T})=\frac{(qz)^{k}(1-qz)}{%
1-z+(1-q)q^{k}z^{k+1}},~~z\in \lbrack 0,1].  \label{eq24}
\end{equation}%
Under the probability measure $\mathbb{\tilde{P}}_{t}$ we have%
\begin{equation*}
q_{t}=\mathbb{\tilde{P}}_{t}(Z_{i}>c)=1-\sum_{x=0}^{c}\binom{n}{x}%
p_{t}^{x}(1-p_{t})^{n-x},
\end{equation*}%
and thus, $\mathbb{\tilde{E}}_{t}(z^{T}),$ is given by (\ref{eq24}), by
replacing $q$ with $q_{t}.$ Taking into account this observation, equality (%
\ref{eq23}) leads to the following formula for the joint pgf of $(T,S_{T}),$ 
\begin{eqnarray}
\mathbb{E}(u^{T}t^{S_{T}}) &=&\mathbb{\tilde{E}}_{t}((u(1-p+pt)^{n})^{T})=%
\mathcal{M}(u(1-p+pt)^{n},q_{t})  \label{eq29} \\
&=&\frac{(q_{t}u(1-p+pt)^{n})^{k}(1-q_{t}u(1-p+pt)^{n})}{%
1-u(1-p+pt)^{n}+(1-q_{t})q_{t}^{k}(u(1-p+pt)^{n})^{k+1}}  \notag
\end{eqnarray}%
for all $u\in \lbrack 0,1]$ and $t\in (0,1]$ guaranteeing that $%
u(1-p+pt)^{n}\in \lbrack 0,1]$ and $t>0,$ as required by (\ref{eq24}) and (%
\ref{eq28}).

The pgf $\mathbb{E}(t^{S_{T}})$ follows readily from the above by setting $%
u=1.$ The distribution of $S_{T},$ which has support $\{k(c+1),k(c+1)+1,...%
\},$ can be numerically evaluated for specific values of the parameters $%
n,p,c$ and $k$ as described after formula (\ref{eq25}). Using this procedure
we calculate $\mathbb{P}(S_{T}=m)\ $for two sets of the parameters and the
results are shown in Figure 4.

\begin{figure}[h]
\begin{center}
\includegraphics[scale=0.35]{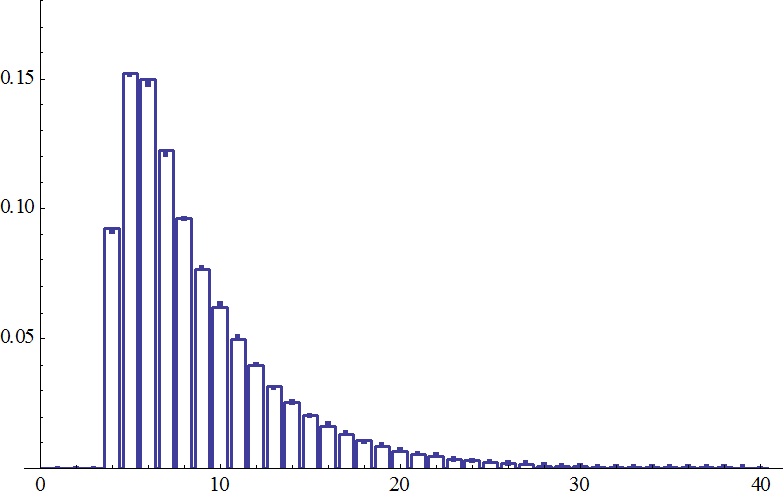} \includegraphics[scale=0.35]{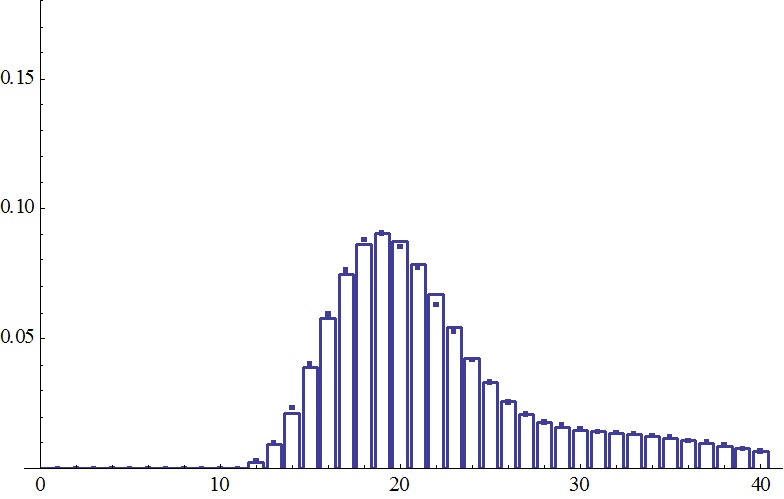}
\end{center}
\caption{The probability mass function of $S_{T}$ (${\protect\small n=20,}$ $%
{\protect\small p=0.1,}$ ${\protect\small c=1,}$ ${\protect\small k=2}$ and $%
{\protect\small n=30,p=0.2,c=3,k=3}$)}
\end{figure}

It should also be mentioned, that since $S_{T}$ is a positive integer-valued
rv, the generating function $\ \mathcal{H}(t)=\sum_{m=0}^{\infty }\mathbb{P}%
(S_{T}>m)t^{m},t\in (-1,1)$\ of the tail probabilities can be easily
determined via the formula 
\begin{equation*}
\mathbb{E}(t^{S_{T}})=1-(1-t)\mathcal{H}(t).
\end{equation*}

The tail probabilities of the distribution of $S_{T}$ can be used in\
practice for the determination of the parameters of the above mentioned
sampling plan. For various combinations of $c$ and $k$ it would be
interesting to know the probability that the total number of defective items
until switching exceeds a certain threshold.\ For example, consider the case
where $n=40,c=1,k=3$ and $p=0.02.$ For $u=1$, Equation (\ref{eq29}) provides
the pgf of $S_{T}$ from which, by differentiation, we get that $\mathbb{E}%
(S_{T})=142.04$ (note that $\mathbb{E}(S_{T})$\ can also be evaluated via
Wald's first equation). Moreover, using $\mathcal{H}(t)$, we can compute the
percentile points of the distribution of $S_{T}$, which provide complete
knowledge about the performance of the sampling plan, in terms of the total
number of defective items found until the switching. Since in that case the
median of the distribution of $S_{T}$\ is 100, we deduce that there is a
probability lower than 50\% that the total number of defective items will
exceed 100 until switching.

It is worth mentioning that the above procedure could easily be expressed in
a more general setting. For example, if the measurements $Z_{i},$ $i=1,2,...$
from the inspected lots follow a general distribution with cdf $F$
(continuous, discrete or mixed) and a switching sampling level occurs at
time $T$ according to some stopping rule (e.g. a $k/m$ scan rule), then
following the methodology described above we can similarly determine the
joint generating function of $(T,S_{T})$ provided that the pgf of $T$ is
known (e.g. is a geometric distribution of order $k/m,$ see Balakrishnan and
Koutras (2002)). In this respect we state without proof the following
proposition.

\begin{proposition}
Let $Z_{i},i=1,2,...$ be a sequence of iid measurements following a
distribution $F$ and let $T$ be the waiting time (i.e. number of $Z_{i}$'s)
until a switching sampling level occurs based on the $k/m$ scan switching
rule: $k$ out of $m$ consecutive $Z_{i}$'s belong to a specific measurable
set $A\subset \mathbf{R}.$ If $\mathcal{M}_{k,m}(z,q)=\mathbb{E}(z^{T}),z\in 
\mathcal{W}$ denotes the pgf of the geometric distribution of order $k/m$
with success probability $q,$ then%
\begin{equation*}
\mathbb{E}(u^{T}e^{wS_{T}})=\mathcal{M}_{k,m}\left( u\mathbb{E}(e^{wZ}),%
\frac{\mathbb{E}(e^{wZ}I(Z\in A))}{\mathbb{E}(e^{wZ})}\right)
\end{equation*}%
for all $u,w$ such that $\mathbb{E}(e^{wZ})<\infty $ and $u\mathbb{E}%
(e^{wZ})\in \mathcal{W}.$
\end{proposition}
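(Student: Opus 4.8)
The plan is to obtain the identity directly from Corollary \ref{cor1}, the only real work being to identify the law of the $k/m$ scan waiting time $T$ under the tilted measure $\mathbb{\tilde{P}}_w$. The key structural observation is that the stopping rule depends on the sequence $Z_1,Z_2,\ldots$ only through the indicator variables $\xi_i:=I(Z_i\in A)$: the event $[T=n]$ is precisely the event that the first appearance of ``$k$ out of $m$ consecutive ones'' among $\xi_1,\xi_2,\ldots$ occurs at trial $n$. Since the $Z_i$ are iid under each of the measures considered, the $\xi_i$ form an iid Bernoulli sequence, and hence $T$ follows the geometric distribution of order $k/m$ whose success parameter is the common value $\mathbb{P}(\xi_i=1)$.

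First I would check the hypotheses of Corollary \ref{cor1}. The condition $\mathbb{E}(e^{wZ})<\infty$ guarantees that the tilted measure $\mathbb{\tilde{P}}_w$ is well defined. Writing $q=\mathbb{P}(Z\in A)\in(0,1)$ for a nondegenerate rule, the pattern ``$k$ out of $m$'' almost surely occurs in finite time in an iid Bernoulli sequence with success probability in $(0,1)$, so $\mathbb{P}(T<\infty)=1$. Because $\mathbb{P}$ and $\mathbb{\tilde{P}}_w$ are equivalent on each $\mathcal{F}_n$ (see the Remark preceding the Theorem), the tilted probability $q_w:=\mathbb{\tilde{P}}_w(Z\in A)$ again lies in $(0,1)$ and therefore $\mathbb{\tilde{P}}_w(T<\infty)=1$ as well. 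With both finiteness conditions in hand, Corollary \ref{cor1} gives
\[
\mathbb{E}(u^{T}e^{wS_{T}})=\mathbb{\tilde{E}}_{w}\bigl((u\,\mathbb{E}(e^{wZ}))^{T}\bigr).
\]

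Next I would evaluate the right-hand side. By the definition of the exponentially tilted distribution,
\[
q_w=\mathbb{\tilde{P}}_w(Z\in A)=\frac{\mathbb{E}(e^{wZ}I(Z\in A))}{\mathbb{E}(e^{wZ})},
\]
which is exactly the second argument in the statement. By the structural observation of the first paragraph, the pgf of $T$ under $\mathbb{\tilde{P}}_w$ equals the geometric-of-order-$k/m$ pgf evaluated at this tilted parameter, i.e. $\mathbb{\tilde{E}}_w(z^{T})=\mathcal{M}_{k,m}(z,q_w)$ for $z\in\mathcal{W}$. Setting $z=u\,\mathbb{E}(e^{wZ})$ and inserting into the previous display yields the asserted formula, the restriction $u\,\mathbb{E}(e^{wZ})\in\mathcal{W}$ being precisely what places this value in the domain of $\mathcal{M}_{k,m}(\cdot,q_w)$.

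The hard part is conceptual rather than computational: one must argue cleanly that the waiting-time law for the $k/m$ scan rule is governed by the \emph{same} function $\mathcal{M}_{k,m}$ under both $\mathbb{P}$ and $\mathbb{\tilde{P}}_w$, with only the success parameter changing from $q$ to $q_w$. This reduction of $\mathbb{\tilde{E}}_w(z^{T})$ to a known pgf is exactly what makes the change of measure in Corollary \ref{cor1} productive; once it is established, the conclusion follows by direct substitution, generalizing verbatim the derivation of (\ref{eq29}) in the $k$-run case.
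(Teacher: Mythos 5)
Your proof is correct and takes essentially the same approach the paper intends: the paper states this proposition without proof, but your argument is exactly the methodology used to derive equation (\ref{eq29}) in the $k$-run case --- apply Corollary \ref{cor1}, observe that the stopping rule depends only on the iid indicators $I(Z_i\in A)$ so that under $\mathbb{\tilde{P}}_{w}$ the waiting time is again geometric of order $k/m$ with tilted success probability $\mathbb{\tilde{P}}_{w}(Z\in A)=\mathbb{E}(e^{wZ}I(Z\in A))/\mathbb{E}(e^{wZ})$, and substitute $z=u\,\mathbb{E}(e^{wZ})$ into $\mathcal{M}_{k,m}$. Your verification of the two a.s.-finiteness hypotheses (via $q_w\in(0,1)$ whenever $q\in(0,1)$, using equivalence of the measures on each $\mathcal{F}_n$) correctly fills in the only detail the paper leaves implicit.
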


The interested reader who wishes to study the general sampling system which
permits a switch from the normal sampling level to the tightened or to the
reduced sampling level may consult Ebneshahrashoob and Sobel (1990) for the
pgf of the associated waiting time rv $T.$

\subsubsection{\textbf{Estimating }$p$\textbf{\ via an EM algorithm.}\ }

In this last subsection we present an interesting application of the formula
of $\mathbb{E}(u^{T}t^{S_{T}})$ obtained above (cf. (\ref{eq29})), regarding
the estimation of the probability $p$ of an item being defective. Assume
that $\nu $ independent inspections are conducted according to the $k$-run
switching rule described above and let $T_{i}$ be the waiting time (i.e.
number of lots)\ until the sampling level of the $i$-th inspection changes, $%
i=1,2,...,\nu $. Denote also by $S_{T_{i}}$ the total number of defective
items found until switching to the tightened sampling level has occurred in
the $i$-th inspection, $i=1,2,...,\nu .$ We are interested in estimating $p$
when only the sample values $\mathbf{\tau }=(\tau _{1},\tau _{2},...,\tau
_{\nu })$ of the $\nu $ aforementioned waiting times are available.

Since the likelihood function $L(p;\mathbf{\tau })=\prod_{i=1}^{\nu }\mathbb{%
P}(T_{i}=\tau _{i}~|~p)\ $does not have a convenient form in order to
directly find the MLE of $p,$ we will show how we can alternatively employ
an EM algorithm, considering $\mathbf{S}_{\tau }=(S_{\tau _{1}},S_{\tau
_{2}},...,S_{\tau _{\nu }})$ as missing values (latent variables). The
likelihood function $L(p;\mathbf{\tau },\mathbf{S}_{\tau })$ now has the
simple form 
\begin{equation*}
L(p;\mathbf{\tau },\mathbf{S}_{\tau })\propto \prod_{i=1}^{\nu }p^{S_{\tau
_{i}}}(1-p)^{n\tau _{i}-S_{\tau _{i}}}=(\frac{p}{1-p})^{\sum_{i=1}^{\nu
}S_{\tau _{i}}}(1-p)^{n\sum_{i=1}^{\nu }\tau _{i}}.
\end{equation*}%
Since $\mathbf{S}_{\tau }$ is not available, we can find the MLE of $p$ by
iteratively applying the following two steps (EM algorithm; cf. Dempster et
al. (1977)):

\textbf{(E-step):} Given $\mathbf{\tau }$ and the estimate of $p$ at the $j$%
-th step, say $p^{(j)}$, compute the conditional expected value of the log
likelihood function, 
\begin{eqnarray*}
Q(p~|~p^{(j)}) &=&\mathbb{E}_{\mathbf{S}_{\tau }|\mathbf{\tau }%
,p^{(j)}}(\log L(p;\mathbf{\tau },\mathbf{S}_{\tau })) \\
&=&\sum_{i=1}^{\nu }\mathbb{E}(S_{T_{i}}|T_{i}=\tau _{i},p^{(j)})\log \frac{p%
}{1-p}+n\sum_{i=1}^{\nu }\tau _{i}\log (1-p).
\end{eqnarray*}%
The expected value $\mathbb{E(}S_{T_{i}}|T_{i}=\tau _{i})$ can be calculated
by 
\begin{equation*}
\mathbb{E}(S_{T_{i}}|T_{i}=\tau _{i})=\frac{1}{\mathbb{P}(T_{i}=\tau _{i})}%
\sum_{m}m\mathbb{P}(S_{T_{i}}=m,T_{i}=\tau _{i}),
\end{equation*}%
which can be derived from Equation (\ref{eq29}). More specifically, the sum $%
\sum_{m}m\mathbb{P}(S_{T}=m,T=r)$ is the coefficient of the $r-$th order
term in the power series expansion of $\mathbb{E}(S_{T}u^{T})$ (where $%
\mathbb{E}(S_{T}u^{T})=\frac{\partial }{\partial t}\mathbb{E}%
(u^{T}t^{S_{T}})|_{t=1}),$ and $\mathbb{P}(T_{i}=\tau _{i})$ can be derived
from the series expansion of $\mathbb{E}(u^{T}).$

\textbf{(M-step): }Find the parameter $p^{(j+1)}$ that maximizes $%
Q(p~|~p^{(j)})$, i.e.,%
\begin{equation*}
p^{(j+1)}=\underset{p}{\arg \max }Q(p~|~p^{(j)})=\frac{\sum_{i=1}^{\nu }%
\mathbb{E}(S_{T_{i}}|T_{i}=\tau _{i},~p^{(j)})}{n\sum_{i=1}^{\nu }\tau _{i}}.
\end{equation*}

The above two steps are repeated until we achieve the desired accuracy in
the estimate $\hat{p}$ of $p$ (e.g. $\hat{p}=p^{(j_{0})}$ where $j_{0}=\min
\{j:|p^{(j)}-p^{(j-1)}|<\varepsilon $). From the above procedure we can also
get an estimate, $\mathbb{E}(S_{T_{i}}|T_{i}=\tau _{i},~\hat{p}),$ of the
unobserved variable $S_{\tau _{i}},i=1,2,...,\nu .$ The observed Fisher
information, which can be exploited for establishing approximate confidence
intervals for $p$, takes on the form 
\begin{eqnarray}
I(\hat{p}) &=&\mathbb{E}_{\mathbf{S}_{T}|\mathbf{\tau },\hat{p}}\left(
-\left. \frac{\partial ^{2}\log L(p;\mathbf{\tau },\mathbf{S}_{\tau })}{%
\partial p^{2}}\right \vert _{p=\hat{p}}\right)  \label{eq34} \\
&=&\frac{1}{(1-\hat{p})^{2}\hat{p}^{2}}\left( (1-2\hat{p})\sum_{i=1}^{\nu }%
\mathbb{E}(S_{T_{i}}|T_{i}=\tau _{i},\hat{p})+n\hat{p}^{2}\sum_{i=1}^{\nu
}\tau _{i}\right) .  \notag
\end{eqnarray}

As an example of the above estimation procedure, suppose that a $k$-run
switching rule is employed for $\nu =20$ inspections with $c=4,k=3,n=50,$
and the resulted waiting times are:%
\begin{equation*}
\mathbf{\tau =(}10,5,17,4,19,3,25,6,16,16,5,4,4,5,6,12,7,12,12,13)
\end{equation*}%
(actually, these are simulated values with $p=0.10$). By employing the EM
algorithm we obtain $\hat{p}=0.0998513$ $(\varepsilon =10^{-8})$ while the
estimates of $S_{\tau _{i}},i=1,2,...,\nu $ are%
\begin{eqnarray*}
&&50,27.4,81.4,22.4,90.3,19.4,117.2,32.4,76.9,76.9, \\
&&27.4,22.4,22.4,27.4,32.4,59,36.4,59,59,63.5
\end{eqnarray*}

The estimated standard error of $\hat{p}$\ is $I(\hat{p})^{-1/2}=0.00299055$
(cf. (\ref{eq34})) and the approximate $1-a=95\%$ confidence interval for $p$
is $(\hat{p}\pm I(\hat{p})^{-1/2}z_{a/2})=(0.0939898,0.105713)$.

\bigskip

\textbf{Acknownledgement}

The work of Athanasios C. Rakitzis is supported by the State Scholarship
Foundation of Greece.

\section{Appendix}

\textbf{The formal construction of} $(\Omega ,\mathcal{F},\mathbb{\tilde{P}}%
_{w}):$ Denote by $\Omega =\mathbf{R}^{\mathbf{N}}$ the collection of all
maps from $\mathbf{N}=\{1,2,...\}$ to $\mathbf{R}$. Each element $\mathbf{x}$
of the product space $\mathbf{R}^{\mathbf{N}}$ can be written as a sequence $%
\mathbf{x}=(x_{1},x_{2},...)$ with each $x_{i}$ belonging to $\mathbf{R}.$
For each $i\in \mathbf{N}$ consider the mapping $Z_{i}:$ $\mathbf{R}^{%
\mathbf{N}}\rightarrow \mathbf{R}$ with $Z_{i}(\mathbf{x})=x_{i}$ (that is, $%
Z_{i}$ is a coordinate function or projection). Let $\mathcal{F=R}^{\mathbf{N%
}}$ be the minimal $\sigma $-algebra such that $Z_{1},Z_{2},...$ are
measurable, i.e. $\mathcal{R}^{\mathbf{N}}:=\sigma (Z_{1},Z_{2},...)=\sigma
(\{ \mathbf{x}\in \mathbf{R}^{\mathbf{N}}:x_{i}\in B\},B\in \mathcal{B}(%
\mathbf{R}\mathbb{)},i\in \mathbf{N)},$ where $\mathcal{B}(\mathbf{R}\mathbb{%
)}$ is the $\sigma $-algebra of the Borel sets of $\mathbf{R}$. Next, denote
by $\mu _{i}$ the probability measure on $\mathcal{B}(\mathbf{R}\mathbb{)}$
that corresponds to $F_{i},i=1,2,...$ . For every $i=1,2,...$ define the
distribution $F_{i}(\cdot |w)$ on $\mathbf{R},$ such that 
\begin{equation*}
F_{i}(x|w):=\frac{\int_{(-\infty ,x]}e^{wz}dF_{i}(z)}{\int_{\mathbf{R}%
}e^{wz}dF_{i}(z)},~~~x\in \mathbf{R},~~w\in \mathcal{W},
\end{equation*}%
which can be considered as the \emph{exponentially tilted} $F_{i}$.
Obviously, $F_{i}(x|0)=F_{i}(x)$. If $\mu _{i}^{w}$ denotes the probability
measure on $\mathcal{B}(\mathbf{R}\mathbb{)}$ corresponding to $F_{i}(\cdot
|w)$ then, equivalently, $\mu _{i}^{w}(B)=\int_{B}e^{wx}\mu _{i}(dx)/\int_{%
\mathbf{R}}e^{wx}\mu _{i}(dx)~$for every $B\in \mathcal{B}(\mathbf{R}\mathbb{%
)}$. Therefore $\mu _{i}^{w}<<\mu _{i}$ and the Radon-Nikodym derivative for 
$\mu _{i}^{w}$ with respect to $\mu _{i}$ reads%
\begin{equation*}
\frac{d\mu _{i}^{w}}{d\mu _{i}}(x)=\frac{e^{wx}}{\int_{\mathbf{R}}e^{wx}\mu
_{i}(dx)},~~~x\in \mathbf{R},~~w\in \mathcal{W}.
\end{equation*}%
Finally, invoking Kolmogorov's Existence Theorem, there exists a probability
measure $\mathbb{\tilde{P}}_{w}$ on $\mathcal{R}^{\mathbf{N}}$ such that the
coordinate variable process $Z_{1},Z_{2},...$ on $(\mathbf{R}^{\mathbf{N}},%
\mathcal{R}^{\mathbf{N}},\mathbb{\tilde{P}}_{w})$ consists of independent
rv's, with distributions $\mu _{1}^{w},\mu _{2}^{w},...$ respectively, and
the construction is completed for all $w\in \mathcal{W}$.

\bigskip

\end{document}